\newcommand{\bbQ}{{\mathbb Q}}
\newcommand{\bbR}{{\mathbb R}}
\newcommand{\bbZ}{{\mathbb Z}}
\newcommand{\SL}{\operatorname{SL}}
\newcommand{\SO}{\operatorname{SO}}
\newcommand{\acts}{\curvearrowright}
\newcommand{\pr}{\operatorname{pr}}
\newcommand{\Isom}{\operatorname{Isom}}
\newcommand{\bdd}{\operatorname{Bdd}}
\newcommand{\Prob}{\operatorname{Prob}}
\newcommand{\half}{\frac{1}{2}}
\newcommand{\diam}{\operatorname{diam}}
\newcommand{\Atm}{\operatorname{Atom}}
\newcommand{\Map}{\operatorname{Map}}
\newcommand{\overto}[1]{{\buildrel{#1}\over\longrightarrow}}
\newcommand{\setdef}[2]{ \left\{ {#1}\ \mid\ {#2} \right\} }
\newtheorem{theorem}{Theorem}[section]
\newtheorem{thm}[theorem]{Theorem}
\newtheorem{lemma}[theorem]{Lemma}
\newtheorem{claim}[theorem]{Claim}
\newtheorem{corollary}[theorem]{Corollary}
\newtheorem{prop}[theorem]{Proposition}
\theoremstyle{definition}
\newtheorem{definition}[theorem]{Definition}
\newtheorem{example}[theorem]{Example}
\newtheorem{remark}[theorem]{Remark}
\newtheorem*{ques*}{Question}
\numberwithin{equation}{section}
\begin{document}

\title[Hyperbolic actions of lattices]{Hyperbolic actions of higher-rank lattices\\ come from rank-one factors}

\author[U. Bader]{Uri Bader}
\address{Weizmann Institute of Science, Israel}
\email{uri.bader@gmail.com}

\author[P.-E. Caprace]{Pierre-Emmanuel Caprace}
\address{Institut de Recherche en Math\'ematiques et Physique, UCLouvain, Belgium}
\email{pe.caprace@uclouvain.be}

\author[A. Furman]{Alex Furman}
\address{University of Illinois at Chicago, Chicago, IL, USA}
\email{furman@uic.edu}

\author[A. Sisto]{Alessandro Sisto}
	\address{Maxwell Institute and Department of Mathematics, Heriot-Watt University, Edinburgh, UK}
	\email{a.sisto@hw.ac.uk}

\date{\today}

\begin{abstract}
We study actions of higher rank lattices $\Gamma<G$ on hyperbolic spaces, and we show that all such actions satisfying mild properties come from the rank-one factors of $G$. In particular, all non-elementary isometric actions on an unbounded hyperbolic space are of this type.  
\end{abstract}

\maketitle


\section{Introduction}


How can a given group act by isometries on a hyperbolic space? The aim of this paper is to study this question for irreducible lattices in a semisimple Lie group $G$ of rank~$\geq 2$. Thomas Haettel \cite{Haettel} addressed the case where all simple factors of the ambient product $G$ have rank~$\geq 2$ and showed, in that case, that the isometric actions of the lattice on hyperbolic spaces are all degenerate (see below for a more precise formulation). In this paper, we allow $G$ to have  simple factors of rank one. Since rank-one  simple groups have a natural geometric action on a proper hyperbolic space (namely, a rank-one symmetric space), the lattices in $G$ do admit non-degenerate actions on hyperbolic spaces via their projections on the  rank-one simple factors of $G$. We show that, up to a natural equivalence, those are the only actions of lattices in $G$ on hyperbolic spaces.  

\subsection{Generalities on actions on hyperbolic spaces} 

Before stating our main theorem, we now explain some general facts about actions on hyperbolic spaces. First of all, any group has isometric actions on hyperbolic spaces that fix a bounded set, as well as actions that fix a point at infinity. Such actions can therefore not be used to deduce anything about the group: from our viewpoint, they are degenerate, and we will disregard them. Moreover, given an action on a hyperbolic space, one could make a larger hyperbolic space containing the first one as a quasi-convex subspace, maintaining the group action. This can be done, for example, by attaching equivariantly geodesic rays. To take this possibility into account, it is natural to also rule out actions that admit a quasi-convex invariant set that is not coarsely dense. In view of all this we define \emph{coarsely minimal} actions (Definition \ref{defn:coarsely_min}) by, essentially, ruling out the pathological behaviours discussed above. Arguably, those are the most general actions that one might want to classify. Moreover, actions on hyperbolic spaces that admit an equivariant quasi-isometry should be considered equivalent, and we capture this in Definition \ref{defn:equivalence}, where there is a subtlety to deal with actions where given subgroups fix a bounded set rather than single points.

\subsection{Rigidity of hyperbolic actions} 

We say that a lattice $\Gamma$ in a product group $G = G_1 \times \dots G_N$ is  \textbf{irreducible} if the closure of the projection of $\Gamma$ to any proper sub-product has finite index. It follows from results of G.~Margulis \cite[Theorem~II.6.7]{Margulis-book} that if each $G_i$ is a simple algebraic group over a local field, then $\Gamma$ is irreducible if and only if $\Gamma$ has no finite index subgroup that splits as a direct product in a nontrivial way. Here is the main result of this paper. 

\begin{theorem}\label{T:main_intro}
Let $N \geq n \geq 0$ be integers. Let $G=\prod_{i=1}^N G_i$ be a product of $N$ centerless simple Lie groups, 
	where for all $i \in \{1, \dots, n\}$, the factor $G_i$ has real rank-one,  and for all $j \in \{n+1, \dots, N\}$, the factor $G_j$ has real rank at least two. Let $\Gamma < G$ be a lattice. 
	Assume that $n\ge 2$ or that $N>n$. If $N>1$, assume in addition that $\Gamma$ is irreducible. 

	Then any coarsely minimal isometric action of $\Gamma$ on a geodesic hyperbolic space is equivalent to one of the actions
	\[
		\Gamma\ \overto{}\ G\ \overto{\pr_i}\ G_i\ \overto{}\ \Isom(X_i,d_i)\qquad (1\le i\le n) 
	\]
	where each $X_i$ is the rank-one symmetric space of 
	the factor $G_i$.
\end{theorem}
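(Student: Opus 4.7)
My proof plan proceeds via measurable boundary theory in the spirit of Bader--Furman. First, I would reduce to the case where the $\Gamma$-action on $X$ is non-elementary: a coarsely minimal action that is elementary either fixes a bounded set (ruled out by coarse minimality) or fixes a point/pair of points at infinity, and such ``degenerate'' cases can be handled separately or are precluded by the hypothesis. For a genuine non-elementary action $\Gamma \acts X$, I would produce a $\Gamma$-equivariant measurable boundary map $\phi \colon B \to \partial X$ (and its square $\phi^{(2)} \colon B\times B \to \dbG$ analogue $\partial^2 X$) from a Furstenberg--Poisson boundary $B$ of $\Gamma$, using the proximal North--South dynamics of loxodromic elements on $\partial X$ and the amenability / strong boundary properties of $B$.

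The crucial step is to use the product structure $B = B_1\times\cdots\times B_N$, where $B_i$ is a Furstenberg boundary of $G_i$ and $\Gamma$ acts through its (dense) projection to each sub-product. I would argue that $\phi$ factors through a single coordinate $B_i$: the other coordinates act by a sufficiently mixing (relatively SAT / weakly mixing) action on natural coefficient spaces over $\partial X$, which forces maps landing in the low-complexity space $\partial X$ (a hyperbolic-type boundary, with the local structure of trees/circles near generic points) to be constant in those variables. This is the kind of Mautner/mixing lemma underlying boundary rigidity. Next, I would rule out the case $i>n$ by invoking Haettel's theorem~\cite{Haettel}: if the factor $G_i$ has higher rank, the induced boundary map would contradict the degeneracy of hyperbolic actions of higher-rank simple factors (since $\Gamma$'s projection to $G_i$ is dense, any such factorization can be promoted to an essentially $G_i$-equivariant statement). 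Hence $i \le n$ and $G_i$ is a standard rank-one group with $B_i \cong \partial X_i$ as $G_i$-spaces.

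It remains to upgrade the measurable equivariant map $\phi \colon \partial X_i \to \partial X$ to a geometric equivalence in the sense of Definition~\ref{defn:equivalence}. Using $2$-transitivity of $G_i$ on $\partial X_i$ and the non-elementary hypothesis, I expect $\phi$ to be essentially a $\Gamma$-equivariant topological embedding (or to become one after passing to quotients identifying fibres of bounded stabilizers). The coarsely minimal hypothesis on $X$ then forces the image of $\phi$ to be ``dense enough'' at infinity, and a Morse-lemma / quasiconvex hull construction produces an equivariant quasi-isometry between $X_i$ and (a coarsely dense subspace of) $X$, yielding the desired equivalence.

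The main obstacle, I expect, lies in the second paragraph: establishing the factorization of $\phi$ through a single $B_i$ uniformly across all the cases covered by the theorem. In the linear setting one could lean on algebraic superrigidity, but the statement explicitly includes non-linear groups such as Burger--Mozes lattices in products of trees, so the argument must be purely dynamical/ergodic. A second delicate point is handling the bookkeeping in Definition~\ref{defn:equivalence} when kernels of the action have non-trivial bounded orbits, and making sure the upgrade from a measurable boundary identification to an equivariant quasi-isometry respects this subtlety uniformly whether $X_i$ is a symmetric space or a tree.
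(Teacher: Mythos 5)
Your overall architecture (boundary map from a product boundary, factorization through one coordinate, elimination of higher-rank factors, geometric upgrade) matches the paper's, but two of your key steps have genuine gaps. First, you cannot rule out the case $i>n$ by ``invoking Haettel's theorem'': \cite{Haettel} applies to \emph{lattices} in products of higher-rank factors, whereas here $\pr_i(\Gamma)$ is merely a \emph{dense} subgroup of $G_i$, and the action on $X$ is an action of $\Gamma$ only --- there is no continuity available to promote the measurable factorization $\phi_i\colon B_i\to\partial X$ to an essentially $G_i$-equivariant statement. The paper instead adapts the Weyl group method of \cite{BF-Weyl} (Section~\ref{sub:step_2.5_rank_1_factor}): it identifies $B_i\times B_i$ with $G_i/Z$ via the big Bruhat cell, uses the classification of $\Gamma$-maps $B\times B\to\partial X$ from Theorem~\ref{thm:boundarymap}(ii)--(iii) to show that the stabilizer $U<W$ of $\psi=\phi_i\circ p_1$ has index at most $2$ in the Weyl group, and then derives a contradiction via Mackey point realization and the irreducibility and faithfulness of the reflection representation of $(W,S)$. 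Some such purely ergodic argument is unavoidable here, for exactly the reason you flag at the end: the theorem covers non-linear lattices, so no linear superrigidity or citation of the higher-rank-lattice literature can substitute.

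Second, your upgrade from the measurable map $\phi\colon B_i\to\partial X$ to an equivariant quasi-isometry is not carried by the mechanism you propose. A measurable boundary map need not be (essentially) a topological embedding, and nothing in your sketch produces one; the Morse-lemma/hull step has no input without a prior metric comparison. The paper's actual bridge from measure theory to coarse geometry is different: bounded subsets of $X_i$ correspond to precompact subsets of $G_i$, and precompactness is detected by uniform bounds on Radon--Nikodym derivatives of the $G_i$-boundary measure (Lemma~\ref{lem:RN}); combining this with the measurable barycenter maps $\beta_\epsilon$ and the map $\Prob_c(\partial X)\to\Prob(\bdd(X))$ yields Claim~\ref{C:d-prec-di} ($d\preceq d_i$), while the reverse bound Claim~\ref{C:d_i-prec-d} is an elementary argument using a loxodromic element and coarse density of $\{\kappa\gamma_0^j\}$ in $X_i$. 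Two further points you gloss over: since $X$ is not assumed proper, $\partial X$ is not compact, so amenability of $B$ does not directly give maps to $\Prob(\partial X)$ --- the paper routes through the compact horoclosure $\bar X$ and shows the measures charge $\bar X^u$; and ruling out a fixed pair at infinity is not ``precluded by the hypothesis'' but requires the quasimorphism rigidity of Burger--Monod via \cite{CCMT}.
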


As mentioned above, we refer to Definition~\ref{defn:equivalence} for the precise notion of \textit{equivalence} appearing in the theorem.

In the special case where $G$ consists only of a single higher rank factor, that is the case where $n=0$ and $N=1$,
our considerations recover for Lie groups the main theorem of \cite{Haettel}. 

\begin{corollary}\label{cor-intro:NoAction}
	Let $G$ be a simple Lie group of real rank~$\geq 2$,  and $\Gamma<G$ a lattice.
	Then $\Gamma$ does not admit any coarsely minimal isometric action on a geodesic hyperbolic space.
\end{corollary}

One can also view Theorem~\ref{T:main_intro} as a generalization of the main result from Margulis' paper \cite{Margulis-amalgam},
where he studied possible amalgam decompositions of lattices in higher rank. 

\subsection{Hyperbolic structures}

The setup adopted here is inspired by the notion of \textit{hyperbolic structures}, defined in \cite{ABO} to capture 
\emph{cobounded} actions on hyperbolic spaces. Coarsely minimal actions provide a similar but broader setup  (see \cite[Proposition 3.12]{ABO} for a comparison). 
In what follows, we regard a hyperbolic structure as an equivalence class (in the sense of Definition \ref{defn:equivalence}) of cobounded actions on hyperbolic spaces. It is a general fact that any such action is either coarsely minimal, or the hyperbolic space being acted on is bounded (giving rise to what is called the trivial structure). While a coarsely minimal action may fail to be cobounded in general,   it is worth noting that Theorem~\ref{T:main_intro} yields another rigidity feature of higher rank lattices, namely:

\begin{corollary}\label{cor-intro:CoarselyMinimal->cobounded}
	Let $\Gamma < G$ be as in Theorem~\ref{T:main_intro}. 
	Then any coarsely minimal isometric $\Gamma$-action on a geodesic hyperbolic space is cobounded.
\end{corollary}

Indeed, since the projection $\Gamma\to G_i$ has dense image, the result follows from Theorem~\ref{T:main_intro} because the $G_i$-action on the model space $X_i$ is cocompact. 

By Corollary~\ref{cor-intro:CoarselyMinimal->cobounded}, the number of hyperbolic structures up to equivalence on $\Gamma$  is the number of coarsely minimal $\Gamma$-actions up to equivalence plus one. Therefore, in the language of \cite{ABO}, Theorem~\ref{T:main_intro} implies that the lattices under consideration 
have exactly $n+1$ inequivalent hyperbolic structures.
Note that in \cite{ABO}, 
for every integer $n \geq 1$ the authors 
construct a finitely generated group $\Gamma$ admitting precisely $n$ distinct 
hyperbolic structures; irreducible lattices in higher-rank semisimple Lie groups provide 
naturally occurring examples of that same phenomenon. Note that any lattice in a higher-rank simple Lie group has only the trivial hyperbolic structure by  \cite{Haettel} or Corollary~\ref{cor-intro:NoAction}.

\begin{example}[Groups with  $n \geq 2$   non-trivial hyperbolic structures]
\hfill{}\\
Let $n \geq 2$ be an integer. Let also $K$ be a totally real number field of degree $n$. Such a field can be constructed as a subextension of degree $n$ in $\bbQ\big(\cos(2\pi/p)\big)$, where $p$ is any prime congruent to $1$ modulo $2n$ (in the special case $n =2$, one can of course simply take $K = \bbQ(\sqrt 2)$). Let $\mathcal O$ be the ring of integers of $K$. Then $K$ has exactly $n$  inequivalent embeddings as subfields of $\bbR$, whose restrictions to $\mathcal O$ have dense image. The corresponding diagonal embedding of $\mathcal O$ in $\bbR^n$ has discrete image.   Consequently, the group 
$$\Gamma = \SL_2(\mathcal O)$$
embeds as an irreducible lattice in the product of $n$ copies of $\SL_2(\bbR)$. Theorem~\ref{T:main} implies that it has precisely $n$ non-trivial hyperbolic structures that arise from its actions on the hyperbolic plane via the projections $\pr_1,\dots,\pr_n$.
\end{example}

\begin{example}[A group with a single non-trivial hyperbolic structure]\hfill{}\\
	Consider the quadratic form $q(x_1,\dots,x_5)=x_1^2+x_2^2+x_3^2+\sqrt{2}x_4^2-x_5^2$, 
	its orthogonal group $\SO(q)=\setdef{g\in\SL_5}{q\circ g=q}$,
	and let $\Gamma=\SO(q)_{\bbZ[\sqrt{2}]}$ be the group of its integer points.
	This group has only one non-trivial hyperbolic structure, because $\Gamma$ is an irreducible lattice in the semisimple real 
	Lie group ${\SO}(4,1)\times \SO(3,2)$
	that has a single rank-one factor ${\SO}(4,1)\simeq \Isom(\mathbb H^4)$ and another simple $\SO(3,2)$ factor of rank two.
\end{example}
%

\subsection{Outline of proofs}

In the proofs we will use boundary theory as outlined in \cite{BF-ICM}. 
Roughly, given a group $\Gamma$ one can associate to it a Lebesgue space $B$, 
called $\Gamma$-boundary, which on one hand has very strong ergodic properties, 
and on the other hand has the property that whenever $\Gamma$ acts on a compact space $Z$, 
there is a $\Gamma$-equivariant map $B\longrightarrow  \Prob(Z)$, called a \textit{boundary map}. In the case of a group $\Gamma$ acting nicely on a hyperbolic space $X$ as in Theorem~\ref{T:main_intro}, a key step consists in constructing an equivariant measurable map $\Phi:B\overto{}\partial X$ into the Gromov boundary of $X$.  
%
%
%
%
To do so, one starts from a boundary map into $\Prob(\bar X)$ --- the space of probability measures
on a certain compactification of $X$ --- and uses the ergodic properties of the $\Gamma$-action on $B$
in conjunction with the geometric properties for the $\Gamma$-action on $X$,
to show that the map $\Phi$ takes values in Dirac measures $\Phi(x)=\delta_{\phi(x)}$
where $\phi(x)\in \partial X\subset\bar{X}$.
The case where $X$ is a proper Gromov-hyperbolic space had been considered already in \cite{BF-ICM}, 
and indeed the main result of Section~\ref{sec:boundary_maps} is a direct generalization of \cite[Theorem 3.2]{BF-ICM}. 

The fact that $X$ is potentially a non-proper space causes several issues. One of them requires to study the relation between  the horofunction compactification $\bar X$ and the Gromov boundary $\partial X$. Similar considerations appear in the work by B.~Duchesne~\cite{Du} and by Maher and Tiozzo~\cite{MT}. 
Related ideas appeared already in the much earlier work \cite{Margulis-amalgam} of
Margulis, where he studied actions of higher rank lattices on trees of possibly infinite valency. Another issue is that the space $\bdd_C(X)$ of $C$-bounded subsets of $X$, which is naturally quasi-isometric to $X$, need no longer be separable. This causes critical separability issues when dealing with boundary maps. In order to circumvent those difficulties, we introduce in Section~\ref{sec:CoarseMetricErgodicity} a new type of ergodicity of boundary $\Gamma$-spaces, called \textit{coarse metric ergodicity}. We show in  Theorem~\ref{thm:CME} that it  is always satisfied by  Furstenberg--Poisson boundaries. In the context of Theorem~\ref{T:main_intro}, we also show that the standard Furstenberg--Poisson boundary of $G$  also enjoys the coarse metric ergodicity as a $\Gamma$-space (Proposition~\ref{prop:G_P_coarse_ergodic}). In Section~\ref{sec:boundary_maps}, we construct boundary maps for any non-degenerate isometric $\Gamma$-action on a hyperbolic space $X$, under the sole hypothesis of existence of an amenable coarsely metrically ergodic $\Gamma$-space (Theorem~\ref{thm:boundarymap}).

In Section \ref{sec:proof_main}, we focus again on the case where $\Gamma$ is a   higher rank lattice, as in our main theorem. In this case the $\Gamma$-boundary of $\Gamma$ splits as a product, with factors corresponding to the factors of the ambient locally compact group $G$. Due to the ergodicity properties of $\Gamma$-boundaries, we see that when $\Gamma$ acts nicely on a hyperbolic space $X$, the boundary map from the $\Gamma$-boundary to $\partial X$ factors through one of the algebraic factors of the ambient group $G$. At this point, there are two cases to analyse. The first case is when the said factor is of rank~$\geq 2$:  we have to show that this cannot occur. This is done in Subsection \ref{sub:step_2.5_rank_1_factor} by adapting the Weyl group method of Bader--Furman \cite{BF-Weyl}. The second case is that the factor as above corresponds to a rank-one factor $G_i$ of $G$. In that case,  we have to show that $X$ is equivalent to  the model symmetric space $X_i$ for $G_i$. This is done in Subsections \ref{sub:step_2_bounded_to_bounded} and \ref{sub:unbounded_in_g_i_is_unbounded}. By hypothesis, the group $G$ has at least two factors in this case, the  projection of $\Gamma$ to $G_i$ has dense image. To show the equivalence between $X_i$ and $X$, metric properties are transferred from $X_i$ to $X$ via the boundary map.  A key  ingredient is that bounded subsets of $X_i$ correspond to precompact subsets of $G_i$, and the latter property  can be rephrased in terms of the boundedness of Radon--Nikodym derivatives for the action on the $G_i$-boundary.

\begin{remark} 
   In a draft version of this paper that was circulated as a preprint, we had adopted a more general set-up, encompassing irreducible lattices in products whose factors were allowed to be simple algebraic groups over local fields, or standard rank-one groups in the sense of \cite{CCMT}. However, the strategy used in that version was hampered by the measurability issues alluded to above, related to the inseparability of the space $\bdd_C(X)$. This is resolved here using coarse metric ergodicity. It is likely that Theorem~\ref{T:main_intro} nevertheless extends to the general set-up we had originally envisaged. A possible approach to establish this would be to generalize Proposition~\ref{prop:G_P_coarse_ergodic} beyond the Lie group case. 
\end{remark}

\subsection*{Acknowledgements}

The authors would like to acknowledge the support:
UB was supported by ISF Moked 713510 grant number 2919/19, PEC was supported by the FWO and the F.R.S.-FNRS under the EOS programme (project ID 40007542), 
AF was supported in part by NSF Grant DMS 2005493,
UB and AF were supported by BSF Grant 2018258.

This project was started at the Oberwolfach workshop ``Geometric structures in group theory" number 1726. The authors are thankful to the Oberwolfach Institute for hosting the workshop, and to the organizers.

The authors are grateful to the referee for valuable comments and suggestions. 


\section{Coarse metric ergodicity}\label{sec:CoarseMetricErgodicity}

\subsection{Definitions}
The goal of this section is to introduce a coarse version of metric ergodicity. Let us recall some definitions before recalling metric ergodicity and discussing why we need a different version.

Let $G$ be a second countable locally compact  group (abbreviated \textit{lcsc group}).
This includes the case of a countable discrete group $\Gamma$.
A \textit{Lebesgue $G$-space} is a Lebesgue space $(\Omega,\mu)$
with a measurable, measure class preserving action map $G\times \Omega\longrightarrow \Omega$.
A \textit{Borel $G$-space $V$} is a standard Borel space $V$ with a Borel
action map $G\times V\longrightarrow  V$.
By a \textit{Lebesgue map} $\Omega\to V$ we mean an equivalence class of functions from $\Omega$ to $V$, defined almost everywhere and up to an almost everywhere identification, which has a Borel measurable representative.

The Lebesgue $G$-space $\Omega$ is \textit{metrically ergodic} if, given any separable metric space $S$ and continuous homomorphism $\pi\colon G\to \Isom(S)$, the only $G$-equivariant measurable maps $\Omega\to S$ are constant. (Ergodicity in the usual sense corresponds to taking $S=\{0,1\}$ and $\pi$ trivial.)

Roughly, the relevance of metric ergodicity in our setting is that, given a higher rank lattice $\Gamma$ acting on a hyperbolic space $X$, we will need to rule out the existence of certain $\Gamma$-equivariant maps $B\to\partial X^{(3)}$, where $B$ is a $\Gamma$-boundary and $\partial X^{(3)}$ is the space of distinct triples of boundary points of $X$. We are not aware of a natural way of endowing $\partial X^{(3)}$ with a \emph{separable} metric. Instead, we can endow it with a ``coarsely separable'' metric, and we will apply coarse metric ergodicity to this metric. Since it does not cost any additional work, we will actually deal with coarse metrics in the following sense:

\begin{definition}
    A \emph{coarse metric} on a set $U$ is a function $d:U\times U\to [0,\infty)$
such that there exists $C>0$ such that for all $x,y,z \in U$,
\[d(x,x)\leq C, \quad d(x,y) \leq d(y,x)+C, \quad \mbox{and} \quad d(x,z)\leq d(x,y) +d(y,z)+C. \]
The function $d$ is a \emph{$C$-coarse metric} on $U$.

Two coarse metrics are coarsely equivalent if their difference is uniformly bounded.
\end{definition}

\begin{remark} \label{rem:eqmetric}
Every $C$-coarse metric $d$ lies within bounded distance of a genuine metric $d'$. Indeed, one can start with $d$, change it so that $d(x,x)=0$, and then set $d_1(x,y)=\max\{d(x,y),d(y,x)\}$. At this point, it is readily checked that $d_1$ is again a $C$-coarse metric, and to get $d'$ one can add to it $C$ times the discrete $\{0,1\}$-metric.
\end{remark}

\begin{definition}
Let $G$ be an lcsc group.
A Lebesgue $G$-space $\Omega$ is said to be \emph{coarsely metrically ergodic} 
if every Borel coarse metric $d$, defined on some conull subset $\Omega_0\subseteq\Omega$ and $G$-invariant as a Lebesgue map $\Omega\times \Omega\to [0,\infty)$, is essentially bounded, 
that is, there exists $R>0$ such for a.e. pair $(x,x')\in\Omega_0\times\Omega_0$ we have $d(x,x')\leq R$.
Equivalently, by Remark~\ref{rem:eqmetric}, 
if every $G$-invariant Borel metric on any $\Omega_0$ as above is essentially bounded.
\end{definition}

\begin{example}\label{ex:double-ergodic}
    If $B$ is a \textit{doubly ergodic} $G$-space, that is, the diagonal $G$-action on $B\times B$ is ergodic, then $B$ is coarsely metrically ergodic. This follows easily from the fact that given a $G$-invariant Borel coarse metric $d$ on $B$, the subsets $\Delta_R=\{(b,b'): d(b,b')\leq R\}\subseteq B\times B$ are $G$-invariant, and their union is the whole $B\times B$. Hence, one of them needs to have positive measure, hence full measure by ergodicity.
\end{example}

\begin{remark} \label{rem:erg}
    Coarse metric ergodicity does not imply ergodicity, as any finite space with a trivial action is clearly coarsely metrically ergodic.
    However, it is easy to see that any coarsely metrically ergodic space has a finite number of ergodic components.
\end{remark}

Note that in the definition of coarse metric ergodicity we do not assume separability of $d$, merely that it is Borel.
An example to keep in mind is the discrete $\{0,1\}$-metric, which is Borel, but not separable unless $\Omega$ is countable.
A coarse metric space is \emph{coarsely separable} if it is covered by a countable collection of balls of a fixed radius.
By the following remark,  essentially all coarse metrics of concern to us are coarsely separable.

\begin{remark} \label{rem:CS}
Given an lcsc group $G$, every $G$-invariant coarse metric on an ergodic space $\Omega$ is coarsely separable by the following argument.
The space is an exhaustion of countably many concentric balls of growing radii, so there exists a radius $r$ and a non-null $r$-ball.
Translating this ball by a dense countable subgroup of $G$ (which acts ergodically, due to the weak* continuity of the $G$-representation on $L^\infty(\Omega)$), we find a countable collection of non-null $r$-balls whose union is of full measure.
\end{remark}

\subsection{Examples}
A first source of coarsely  metrically ergodic actions of a group $\Gamma$ is given by doubly ergodic $\Gamma$-spaces, as pointed out in Example~\ref{ex:double-ergodic}. For our purposes, we need a amenable $\Gamma$-spaces $B_-$ and $B_+$ such that the $\Gamma$-action on $B_-\times B_+$ is coarsely metrically ergodic, see Theorem~\ref{thm:boundarymap} below. Our main source for such $\Gamma$-spaces is provided by the following. 

\begin{thm} \label{thm:CME}
Let $G$ be an lcsc group.
Let $\mu$ be a spread-out generating probability measure on $G$ and let $B_-$ and $B_+$ be the corresponding past and future Furstenberg-Poisson boundaries, that is the Furstenberg-Poisson boundaries associated with the measures $\check{\mu}$ and $\mu$ correspondingly.
Then $B_-$ and $B_+$ are amenable Lebesgue $G$-spaces such that the diagonal $G$-action on $B_-\times B_+$ is metrically ergodic and coarsely metrically ergodic.
\end{thm}

In the case where  $\mu$ is a symmetric measure, we obtain a single amenable $G$-space $B$, equal to $B_-$ and $B_+$. We emphasize that  the coarse metric ergodicity of the $G$-action on $B \times B$ should not be expected to hold for the same reason as in Example~\ref{ex:double-ergodic}, since  the $G$-action on $B \times B \times B \times B$ need not be ergodic: indeed, in the case where $G$ is a rank one simple Lie group, the space $B$ can be identified with the visual boundary of the symmetric space of $G$ and $B \times B \times B \times B$ is the space of $4$-tuple of distinct points. The cross ratio is a $G$-invariant function on that space.

Instead, the proof of Theorem~\ref{thm:CME} relies on the following lemma.

\begin{lemma}[cf. \cite{BF-ICM}*{Lemma~2.8}] \label{lem:BF2.8}
Let $G$ be an lcsc group and let $\mu$ be a spread-out generating probability measure on $G$.
Let $(B_-,\nu_-)$ and $(B_+,\nu_+)$ be the corresponding past and future Furstenberg-Poisson boundaries.
Given a positive measure subset $E\subset B_-\times B_+$ and $\epsilon>0$, there is $g\in G$ and a positive measure subset 
$E_1^-\subset E^-\cap g^{-1}E^-$, where $E^{-}$ is the projection of $E$ to $B_-$, so that for every $b_-\in E_1^-$, $\nu_+(gE_{b_-})>1-\epsilon$, where $E_{b_-}=\{b_+ \in B_+ \mid (b_-,b_+)\in E)\}$.
\end{lemma}

\begin{proof}[Proof of Theorem~\ref{thm:CME}]
The fact that $B_-$ and $B_+$ are amenable Lebesgue $G$-spaces such that the diagonal $G$-action on $B_-\times B_+$ is metrically ergodic is proven in \cite[Theorem 2.7]{BF-ICM}.
We will show that diagonal $G$-action on $B_-\times B_+$ is coarsely metrically ergodic.

    Fix a 
    $G$-invariant Borel metric $d:(B_-\times B_+)^2\to [0,\infty)$.
    By Remark~\ref{rem:CS} and upon normalization, we assume as we may that $d$ is $1$-separable.
We will show that for a.e. $b_-\in B_-$ the essential diameter of $\{b_-\}\times B_+$ is bounded by $3$.
    By symmetry, 
    for a.e. $b_+\in B_+$ the essential diameter of $B_-\times \{b_+\}$ is bounded by $3$, thus the essential diameter of $B_-\times B_+$ is bounded by $6$.
    
We find a countable collection $A_i$ of non-null $1$-balls whose union is of full measure.
We consider the projection $B_-\times B_+\to B_-$ and for every $i$ we let $A_i^-$ be the image of $A_i$. 
For every natural $n$, we let $A_{i,n}^-$ be the set of those $b_-\in A_i^-$ such that  
\[ \nu_+(\{b_+\in B_+\mid (b_-,b_+)\in A_i\}) \geq 1/n. \]
Note that there exists $N_i$ such that the sets $A_{i,n}^-$ are non-null for every $n\geq N_i$.
Using the symbol $\circeq$ for equation up to null-sets, we get $A_{i}^-\circeq\bigcup_{n\geq N_i}~ A_{i,n}^-$.
We let $A_{i,n}$ be the preimage of $A_{i,n}^-$ in $A_i$ and get
\[ A_{i}\circeq\bigcup_{n\geq N_i}~ A_{i,n} \quad \mbox{and} \quad B_+\times B_-\circeq \bigcup_i~ A_{i}\circeq\bigcup_{i,{n\geq N_i}}~ A_{i,n}. \]
By modifying the sets $A_{i,n}$, we assume as we may that for every $i',n',i'',n''$, $A_{i',n'}^-\cap A_{i'',n''}^-$ is either non-null or empty, upon throwing away the preimages of the intersection in case it is null.
By Fubini, for a.e. $b_-\in B_-$, for a.e. $b_+\in B_+$, $(b_-,b_+)\in \bigcup_{i,{n\geq N_i}}~ A_{i,n}$.
We fix a generic $b_-\in B_-$ and consider the full measure subset of the fiber over $b_-$,
\[ D=(\{b_-\}\times B_+)\cap (\bigcup_{i,{n\geq N_i}}~ A_{i,n}). \]
We claim that the set $D$ is bounded by $3$. In order to see this, let us fix $b_+',b_+''\in B_+$ such that $(b_-,b_+'),(b_-,b_+'')\in D$.
Fix $i',n',i'',n''$ such that $(b_-,b_+')\in A_{i',n'}$, $(b_-,b_+'')\in A_{i'',n''}$ and $n'>N_{i'}$, $n''>N_{i''}$.
Set 
$$\epsilon=\min\{1/n',1/n''\} 
\qquad \text{and} \qquad
E^-=A_{i',n'}^-\cap A_{i'',n''}^-.$$
Let $E$ be the preimage of $E^-$ in $A_{i',n'}$ and note that it is non-null.
By Lemma~\ref{lem:BF2.8}, there exist $g\in G$ and a non-null subset $E_1^-$ in $E^-\cap g^{-1}E^-$ such that for every $b_-'\in E_1^-$, 
\[ \nu_+(gE_{b_-'}) > 1-\epsilon, \]
where $E_{b_-'}=\{b_+\mid (b_-',b_+)\in E\}$.
Fixing such $b_-'$, we conclude that the set $gE_{b_-'}$ is bounded by $1$ and it intersects both sets $A_{i',n'}$ and $A_{i'',n''}$.
We conclude that $d((b_-,b_+'),(b_-,b_+''))<3$.
This finishes the proof.
\end{proof}

For lattices in a semisimple Lie group $G$, one can realize the unique $G$-invariant measure class on $G/P$ as a Furstenberg-Poisson boundary, leading to:

\begin{prop}
\label{prop:G_P_coarse_ergodic}
  Let $G$ be a semisimple Lie group and $\Gamma<G$ a lattice.
  Let $P<G$ be a minimal parabolic and endow the coset space $G/P$ with the unique $G$-invariant measure class and consider it as a Lebesgue $\Gamma$-space.
  Then $G/P$ is an amenable Lebesgue $\Gamma$-space such that the diagonal $\Gamma$-action on $G/P\times G/P$ is metrically ergodic and coarsely metrically ergodic.
\end{prop}

\begin{proof}
We fix a maximal compact subgroup $K<G$.
In \cite{Furst63} Furstenberg proved that all bounded Harmonic functions on the symmetric space $G/K$ are represented via a Poisson formula as bounded measurable functions on $G/P$ with respect to a certain $G$-quasi invariant measure $\nu\in \Prob(G/P)$, namely the unique $K$-invariant measure.
Following the discretization process applied by Furstenberg in the special case of the hyperbolic plane, Lyons and Sullivan obtained in \cite[Theorem~5]{Lyon-Sull} a discretization scheme which applies to any *-recurrent discrete subset of a Riemannian manifold of bounded geometry (see \cite[Section 7]{Lyon-Sull} for the definition of *-recurrent),
an example of such setting being the orbit of $\Gamma$ on $G/K$.
Their discretization procedure for the pair $(G/K,\Gamma.K)$ commutes with the symmetries of the pair, thus provides a probability measure $\mu$ on $\Gamma$ for which the bounded $\mu$-harmonic functions are exactly the restriction to $\Gamma$ of the bounded harmonic functions on $G/K$.
That is, the future Furstenberg-Poisson boundary $B_+$ of $(\Gamma,\mu)$ is $(G/P,\nu)$.
The main theorem of \cite{Bal-Led} shows further that the measure $\mu$ could be chosen to be symmetric, that is $\check{\mu}=\mu$.
This shows that the past Furstenberg-Poisson boundary $B_-$ of $(\Gamma,\mu)$ is $(G/P,\nu)$ as well.
We are thus done by Theorem~\ref{thm:CME}.
\end{proof}

\subsection{Coarse invariance}

One can also make a further coarsification:

\begin{definition}
    A coarse metric on a $G$-space $U$ is said to be coarsely invariant if there exists $C>0$ such that for every $x,y\in U$ and $g\in G$, $|d(gx,gy)-d(x,y)|<C$.
\end{definition}

This coarsification turns out to not correspond to a more restrictive type of ergodicity:

\begin{lemma} \label{lem:coarsecoarseergodic}
    Assume $\Gamma$ is a countable group and $\Omega$ is a coarsely metrically ergodic Lebesgue $\Gamma$-space.
    Then every coarsely invariant Borel coarse metric on $\Omega$ is bounded.
    In fact, every coarsely invariant Borel function $\Omega\times \Omega \to [0,\infty)$ which is coarsely symmetric and satisfies the coarse triangle inequality is a bounded coarse metric.
\end{lemma}

\begin{proof}
    Given a coarsely invariant coarse metric $d$ on $\Omega$, observe that $d'(x,y)=\sup \{d(gx,gy)\mid g\in \Gamma\}$ is a $\Gamma$-invariant measurable coarse metric. Its boundedness implies the boundedness of $d$.
    The last part of the lemma follows from the fact that the function $x\mapsto d'(x,x)$ is automatically bounded, by Remark~\ref{rem:erg}.
\end{proof}

\begin{remark}
    We could say that $\Omega$ is coarsely coarse metrically ergodic but we honestly think this is too much.
\end{remark}

\subsection{Countable quotients}

The following lemma shows that up to coarse equivalence, every coarsely separable coarse metric is pulled back from a countable quotient.

\begin{lemma} \label{lem:countquotienyt}
    Let $d$ be a coarsely separable coarse metric on $U$.
    Then there exists countable measurable quotient $\pi:U\to V$ and a coarse metric $d'$ on $V$ such that $d$ is coarsely equivalent to the metric $\pi^*d'$, defined by $d(x,y)=d'(\pi(x),\pi(y))$.
\end{lemma}

\begin{proof}
        We choose a countable $C$-dense subset $V$ in $U$, identify it with $\mathbb{N}$ and define $\pi$ inductively, sending points at the $C$-ball around $v\in V$ to $v$, unless their $\pi$ value was already determined. We set $d'=d|_V$.
\end{proof}

\subsection{Spaces of probabilities}

Given a Borel space $U$, we endow the space $\Prob(U)$ with the Borel structure generated by the evaluation maps associated with all bounded Borel functions on $U$. We will need Proposition \ref{prop:maps_to_prob} below to rule out the existence of maps from the (double) boundary of the lattices we will be considering to various probability spaces $\Prob(U)$, including for $U$ the space of distinct boundary triples of a hyperbolic space being acted on.

If $U$ is a standard Borel space, then  $\Prob(U)$ is also a standard Borel space.
If $\pi:U\to V$ is a Borel map then the induced map $\pi_*:\Prob(U)\to \Prob(V)$ is a Borel map.

We assume henceforth that the Borel space $U$ is endowed with   a coarse metric $d$. In that case, we define the function
\[ \bar{d}: \Prob(U) \times \Prob(U) \to [0,\infty) \]
by setting
\[ \bar{d}(\mu_1,\mu_2)=\inf \{r>0\mid \exists \mbox{ a Borel set } A \mbox{ in U},~\diam(A)< r,~\mu_1(A),\mu_2(A)>1/2\}. \]

Note that $\bar{d}(\mu,\mu)$ might be arbitrarily large, so this is not necessarily a coarse metric. However, we have the following.

\begin{lemma} \label{lem:bardborel}
The function $\bar{d}$ is symmetric and it satisfies the coarse triangle inequality: the difference $\bar d(\mu_1,\mu_2)-\bar d(\mu_1,\mu_0)-\bar d(\mu_0,\mu_2)$ is uniformly bounded from above.
The restriction of $\bar{d}$ to $U$ via the identification $x\mapsto \delta_x$ gives the coarse metric $(x,y) \mapsto \max\{d(x,y),d(y,x)\}$ which is at a bounded distance away from $d$.

Moreover, if $U$ is a countable set and $d$ is a Borel coarse metric on $U$ then $\bar{d}$ is a Borel function.
\end{lemma}

\begin{proof}
The first assertions of the lemma are straightforward. For the last assertion under the extra hypothesis that   $U$ is countable, we observe that the infimum in the definition of $\bar d$ could be taken over the countable collection of the finite subsets of $U$. This ensures that $\bar d$ is indeed Borel. 
\end{proof}

\begin{prop}
\label{prop:maps_to_prob}
    Let $\Gamma$ be a   countable group and  $\Omega$ be a coarsely metrically ergodic Lebesgue $\Gamma$-space.
    Let $U$ be a Borel space and consider the Borel structure on $\Prob(U)$ generated by evaluation at bounded Borel functions on $U$. 
    If there exists a Borel $\Gamma$-map from $\Omega$ to $\Prob(U)$, then every $\Gamma$-invariant Borel, coarsely separable coarse metric on $U$ has bounded $\Gamma$-orbits.    
\end{prop}

\begin{proof}
    We assume $f:\Omega \to \Prob(U)$ is a Borel $\Gamma$-map
    and $d$ is a coarsely separable $\Gamma$-invariant Borel coarse metric on $U$ and we argue to show that the $\Gamma$-orbits in $U$ are $d$-bounded.
    Since the restriction of $\bar{d}$ to $U$ is at a bounded distance away from $d$ (see Lemma~\ref{lem:bardborel}), it is enough to show that $\Prob(U)$ contains a $\bar{d}$-bounded subset.

We let $\pi:U\to V$ and $d'$ be as in Lemma~\ref{lem:countquotienyt}, so that the pull back $\pi^*d'$ is coarsely equivalent to $d$.
It follows $\overline{\pi^*d'}$ is coarsely equivalent to $\bar{d}$.
By Lemma~\ref{lem:bardborel}, $\bar{d'}$ is Borel, symmetric and it satisfies the coarse triangle inequality. Hence the same applies also to $\overline{\pi^*d'}=(\pi_*)^*\bar{d'}$.

By Lemma~\ref{lem:coarsecoarseergodic}, $f^*\overline{\pi^*d'}$ is a bounded coarse metric on $\Omega$.
We deduce that $\Prob(U)$ contains a $\overline{\pi^*d'}$-bounded subset. The proof is now complete, as this subset is $\bar{d}$-bounded.
\end{proof}

\subsection{Barycenter maps} \label{sec:bary}
The last result in this section is not about coarse metric ergodicity, but it fits here since we are discussing the spaces $\Prob(U)$. Let $U$ be a Borel space and $d$ a measurable metric on $U$. Roughly, we will construct ``barycenters'' for probability measures, and the property that we will need is roughly that measures with bounded Radon-Nykodim derivatives with respect to each other have approximately the same barycenter.

    We denote by $\bdd(U)$ the space of non-empty $d$-bounded subsets of $U$.
    Given a measure $m\in\Prob(U)$ and $\epsilon\in (0,1/2)$ we define
    \[ r_\epsilon(m)=\inf \{\diam(A) \mid A \mbox{ is a Borel set in } U,~m(A)\geq 1-\epsilon\} \]
and we let $\beta_\epsilon(m)$ be the union of all Borel sets $A$ in $U$ of diameter less than $r_\epsilon(m)+1$ and $m(A)\geq 1-\epsilon$.
Note that since $1-\epsilon<1$, the infimum in the definition of $r_\epsilon$ is taken over a non-empty set and $\beta_\epsilon(m)$ is a non-empty subset of $U$.

\begin{lemma}\label{L:centers}
Let $U$ be a Borel space and $d$ a measurable metric on $U$.
    For every $\epsilon \in (0,1/2)$ and for every $m\in\Prob(U)$,
    $\beta_\epsilon(m)$ is a non-empty $d$-bounded subset of $U$, and the map $\beta_\epsilon:\Prob(U)\to \bdd(U)$, is equivariant under all isometries of $U$. 
		Moreover, given $C\geq 1$, if $m_1,m_2\in \Prob(U)$ 
		are such that for every measurable $E\subseteq U$ we have
		\[ 
			m_2(E)\le C m_1(E) 
		\]
		then
		\[
			\beta_{\epsilon}(m_2)\subseteq N_{R}(\beta_{\epsilon/C}(m_1)),
		\]
		where $R=r_{\epsilon/C}(m_1)+1$.
\end{lemma}

\begin{proof}
By the fact that $1-\epsilon>1/2$, all the Borel subsets $A$ with $m(A)\geq 1-\epsilon$ intersect in pairs, so the diameter of $\beta_\epsilon(m)$ is bounded by $2r_\epsilon(m)+2$. Equivariance is clear.

Let now $C,m_1,m_2$ as in the last statement. 
Note that every 
Borel subset $A$ of $U$ with $m_1(A)\geq 1-\epsilon/C$ satisfies 
$m_2(U\setminus A)\leq Cm_1(U\setminus A) \leq \epsilon$, hence 
$m_2(A)\geq 1-\epsilon$. 
It follows that $r_{\epsilon}(m_2)\leq r_{\epsilon/C}(m_1)$.

If $u\in \beta_{\epsilon}(m_2)$, then by definition $u$ belongs to a Borel set $A$ with $m_2(A)\geq 1-\epsilon$ and diameter less than $r_{\epsilon}(m_2)+1\leq r_{\epsilon/C}(m_1)+1$. Fix any Borel set $B$ of diameter less than $r_{\epsilon/C}(m_1)+1$ and $m_1(B)\geq 1-\epsilon/C$. 
Then $m_2(B)\geq 1-\epsilon$ and we get 
$A\cap B\neq \emptyset$.
In particular, $u$ lies within $r_{\epsilon/C}(m_1)+1$ of $B$, whence of $\beta_{\epsilon/C}(m_1)$.
\end{proof}

\section{Boundary maps}
\label{sec:boundary_maps}

In this section we fix a countable group $\Gamma$ and discuss {\em boundary maps}
associated to various hyperbolic structures on $\Gamma$.
In particular we prove Theorem~\ref{thm:boundarymap} below.
The novel aspect of this theorem is that the Gromov hyperbolic spaces 
it deals with are not assumed to be proper.
Recall that the Gromov boundary of a proper Gromov hyperbolic space 
is compact and the associated action 
is a {\em convergence group} action.
Boundary maps associated with such actions were considered in \cite{BF-ICM}*{Theorem~3.2}.

\medskip

Given a Lebesgue $\Gamma$-space $\Omega$ and a standard Borel $\Gamma$-space $V$, 
we denote by $\Map_\Gamma(\Omega,V)$ the space of equivalence classes of measurable $\Gamma$-equivariant
maps $f:\Omega\to V$, i.e. those measurable maps that satisfy
$f(g.\omega)=g.f(\omega)$ for a.e. $g\in \Gamma$ and a.e. $\omega\in\Omega$. 
Two such maps $f, f':\Omega\to V$ are identified if $f(\omega)=f'(\omega)$ for a.e. $\omega\in\Omega$.
Any such map $f$ is equivalent to $f_0:\Omega\to V$ such that for every $g\in \Gamma$
we have $f_0(g.\omega)=g.f_0(\omega)$ a.e. $\omega\in\Omega$ (\cite[Proposition B.5]{Zimmer}).

\medskip

In the following theorem $(X,d)$ is a separable Gromov hyperbolic space,
$\Gamma$ a countable group
and $\Gamma\longrightarrow \Isom(X,d)$ a homomorphism.
We denote by $\partial X$ the associated Gromov boundary, which is a Polish space on which $\Gamma$ acts by homeomorphisms, let $\partial X^2$ be its square,
and
\[
    \partial X^{(2)}=\setdef{(\xi,\eta)}{\xi\ne \eta \in \partial X}
\]
the subset of  pairs of distinct boundary points. 
Since $X$ is not assumed to be proper, $\partial X$ is not 
necessarily compact. Yet, $\partial X$ is a standard Borel space,
and so are $\partial X^{(2)}\subset \partial X^2$.
The action of $\Gamma$ on all these spaces is Borel.
 
\begin{theorem}[{Compare \cite{BF-ICM}*{Theorem~3.2}}] \label{thm:boundarymap}\hfill{}\\
Let $\Gamma$ be a countable group. Assume $B_-$ and $B_+$ are amenable Lebesgue $\Gamma$-spaces such that the diagonal $\Gamma$-action on $B_-\times B_+$ is ergodic and coarsely metrically ergodic.
	Let $(X,d)$ be a separable, Gromov hyperbolic (possibly non-proper), geodesic metric space
	and let $\Gamma$ act on $X$ by isometries.
	Assume that $\Gamma$ does not fix a bounded set in $X$ and does not fix a point or
	a pair of points in $\partial X$.
	
	Then there exist $\phi_-\in \Map_\Gamma(B_-,\partial X)$, 
	$\phi_+\in\Map_\Gamma(B_+,\partial X)$
	such that the image of the map $\phi_{\bowtie}\in\Map_\Gamma(B_-\times B_+,\partial X^2)$
	given by
	\[
		\phi_{\bowtie}(x,y)=(\phi_-(x),\phi_+(y))
	\]
	is contained, modulo a null set, in the set of distinct pairs 
	$\partial X^{(2)}\subset \partial X^2$. Moreover, each of the following assertions holds.
	\begin{itemize}
		\item[{\rm (i)}] $\Map_\Gamma(B_-,\Prob(\partial X))=\left\{ \delta\circ \phi_-\right\}$,
		and $\Map_\Gamma(B_+,\Prob(\partial X))=\left\{ \delta\circ \phi_+\right\}$.
		\item[{\rm (ii)}] $\Map_\Gamma(B_-\times B_+,\partial X) = \{ \phi_-\circ \pr_-,\phi_+\circ \pr_+\}$,
		\item[{\rm (iii)}] $\Map_\Gamma(B_-\times B_+,\partial X^{(2)}) = \{ \phi_{\bowtie},\ \tau\circ \phi_{\bowtie}\}$, 
		where $\tau(\xi,\xi')=(\xi',\xi)$.
	\end{itemize}
\end{theorem}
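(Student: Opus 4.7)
The plan is to follow the proof strategy of \cite{BF-ICM}*{Theorem~3.2}, adapting each step to handle the non-properness of $X$. The essential new difficulty is that $\partial X$ need not be compact, so amenability of the $\Gamma$-actions on $B_\pm$ does not immediately yield maps into $\Prob(\partial X)$. Following \cite{MT} and \cite{Du}, I would use the horofunction compactification $\overline{X}^{h}$ of $X$ as a compact surrogate: since $(X,d)$ is separable, $\overline{X}^{h}$ is a compact metrizable $\Gamma$-space containing $X$ as a dense $\Gamma$-invariant subset, and in the Gromov hyperbolic setting there is a canonical $\Gamma$-equivariant Borel ``shadow'' map from the horoboundary to $\partial X$, defined off a ``finite'' part corresponding to horofunctions with bounded sublevel behavior.

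For the construction of $\phi_\pm$ I would proceed in three steps. First, amenability of $\Gamma \acts B_\pm$ produces $\Gamma$-equivariant measurable maps $\psi_\pm \in \Map_\Gamma(B_\pm, \Prob(\overline{X}^{h}))$. Second, I would exclude mass on $X$ itself and on the ``finite'' part of the horoboundary: any such mass would, via an equivariant barycenter construction on bounded subsets and metric ergodicity applied to the Hausdorff metric on closed bounded subsets of $X$, produce a $\Gamma$-invariant measurable family of bounded sets and hence a $\Gamma$-fixed bounded set, contradicting the hypothesis. Third, pushing forward under the shadow map yields measures in $\Prob(\partial X)$, and a further application of metric ergodicity together with the assumption that $\Gamma$ fixes no point or pair in $\partial X$ collapses these to Dirac measures (if the essential support had at least two points one would extract an equivariant two-point subset and thereby an invariant pair). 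This yields $\phi_\pm \in \Map_\Gamma(B_\pm, \partial X)$ with $\psi_\pm(b) = \delta_{\phi_\pm(b)}$.

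The essential distinctness of $(\phi_-(x), \phi_+(y))$ follows by examining the $\Gamma$-invariant measurable set $D = \{(x,y) \in B_-\times B_+ : \phi_-(x) = \phi_+(y)\}$: if $D$ had positive measure, relative metric ergodicity of $\pr_+$ would force $\phi_+$ to be essentially constant, contradicting the absence of a $\Gamma$-fixed point in $\partial X$. The three uniqueness statements then fall out of the same toolkit. For (i), applying the Dirac reduction of the previous step to any element of $\Map_\Gamma(B_\pm, \Prob(\partial X))$ and invoking metric ergodicity against a separable invariant metric on $\partial X$ shows that Dirac reductions are unique and agree with $\phi_\pm$. For (ii), relative metric ergodicity of $\pr_\pm : B_- \times B_+ \to B_\pm$ forces any $\Phi \in \Map_\Gamma(B_- \times B_+, \partial X)$ to essentially factor through one of the projections, at which point (i) identifies it with $\phi_- \circ \pr_-$ or $\phi_+ \circ \pr_+$. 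Statement (iii) reduces to two applications of (ii) combined with the distinctness already established.

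The principal obstacle is the second step of the construction: showing that $\psi_\pm(b)$ concentrates on the Gromov boundary $\partial X$, rather than on the complicated ``finite'' part of the horofunction compactification. In the proper case treated in \cite{BF-ICM} this is immediate from compactness of $\partial X$; in the non-proper Gromov hyperbolic setting it requires a careful geometric analysis of horofunctions, together with a mechanism, via an equivariant barycenter and metric ergodicity, to convert equivariant mass on the finite horoboundary into an honest $\Gamma$-invariant bounded subset of $X$, contradicting the hypothesis. Once this concentration is established, the remainder of the argument transcribes the proof of \cite{BF-ICM}*{Theorem~3.2} essentially verbatim.
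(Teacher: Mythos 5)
Your construction of $\phi_\pm$ follows the same route as the paper and is essentially correct: the horoclosure as compact surrogate, exclusion of mass on the ``bounded'' part $\bar X^b$ via the map $I:\bar X^b\to \bdd(X)$ of Lemma \ref{lem:horo_to_sets}, the barycenter $\beta_\epsilon:\Prob(\bdd(X))\to\bdd(X)$, and metric ergodicity applied to $\bdd(X)$ with the Hausdorff metric --- on which $\Gamma$ genuinely acts isometrically --- yielding Claim \ref{claim:no_B_to_bound} and hence concentration of the amenability maps on $\bar X^u$, followed by pushforward under $\pi:\bar X^u\to\partial X$. You also correctly identify this concentration step as the main new difficulty relative to the proper case. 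The gap lies downstream, in the Dirac collapse and the uniqueness statements (i)--(iii). You invoke ``metric ergodicity against a separable invariant metric on $\partial X$'' for (i), and you assert that relative metric ergodicity of $\pr_\pm$ ``forces any $\Phi\in\Map_\Gamma(B_-\times B_+,\partial X)$ to essentially factor through one of the projections'' for (ii). Neither move is available: $\Gamma$ acts on $\partial X$ by homeomorphisms, not by isometries of any metric, and relative metric ergodicity constrains only maps into measurable families of metric spaces with a fiberwise \emph{isometric} cocycle --- the constant family $\partial X$ does not qualify. Indeed, (ii) is precisely the hard point, and your argument assumes its conclusion. Likewise your mechanism for Dirac-ness (``if the essential support had at least two points one would extract an equivariant two-point subset and thereby an invariant pair'') is a non sequitur: an equivariant map from $B_\pm$ to unordered pairs in $\partial X$ need not be essentially constant, and none of the tools you set up applies to it directly.

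The missing ingredient is Lemma \ref{lem:triples_to_sets}: a Borel $\Gamma$-equivariant map $\partial X^{(3)}\to\bdd(X)$ sending a triple of distinct boundary points to the coarse center of the ideal triangle (itself requiring nontrivial measurability work in the non-proper setting). This is the device that converts diagonal-versus-distinct dichotomies in $(\partial X)^3$ into the bounded-set contradiction of Claim \ref{claim:no_B_to_bound}. Concretely, the paper proves Dirac-ness (Claim \ref{claim:delta}) by considering $(b_-,b_+)\mapsto \phi'_-(b_-)\times\phi'_+(b_+)\times\half\left(\phi'_-(b_-)+\phi'_+(b_+)\right)\in\Prob\left((\partial X)^3\right)$: if this essentially charges $(\partial X)^{(3)}$ one pushes forward to $\Prob(\bdd(X))$ and contradicts Claim \ref{claim:no_B_to_bound}, while concentration on the partial diagonal forces at most two atoms and then Dirac measures. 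Uniqueness in (i) then follows from the averaging trick --- two essentially distinct $\phi^1,\phi^2$ would give the non-Dirac map $b\mapsto\half\left(\delta_{\phi^1(b)}+\delta_{\phi^2(b)}\right)$, contradicting the fact (which the paper notes holds for \emph{arbitrary} maps $B_\pm\to\Prob(\partial X)$) that all such maps are essentially Dirac. And (ii) is proved by applying the same triangle-center trick to $\Psi=\psi\times(\phi_-\circ\pr_-)\times(\phi_+\circ\pr_+)$, forcing $\Psi$ to avoid $(\partial X)^{(3)}$ and hence $\psi$ to coincide a.e.\ with one of the coordinate maps; (iii) then reduces to (ii) coordinate-wise, as you say. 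So your proposal resolves the genuinely new non-properness issue but cannot close the uniqueness half of the theorem as written: that half runs on the ideal-triangle barycenter map, not on relative metric ergodicity.
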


The rest of this section is devoted to the proof of this theorem.

\subsection{The horoclosure of a separable metric space}

Let $(X, d)$ be a separable metric space. 
We consider the vector space of functions from $X$ to $\mathbb{R}$
endowed with the topology of pointwise convergence, i.e. the product space $\mathbb{R}^X$,
and the constant function $\mathbf{1}\in \mathbb{R}^X$.
We endow $\mathbb{R}^X/\mathbb{R}\cdot \mathbf{1}$ with the quotient topological vector space structure.
We map $X$ to $\mathbb{R}^X$ by $x\mapsto d(\cdot,x)$
and consider its image in $\mathbb{R}^X/\mathbb{R}\cdot\mathbf{1}$.
We denote the closure of the image of $X$ in $\mathbb{R}^X/\mathbb{R}\cdot \mathbf{1}$ by $\bar{X}$ and call it the
{\em horoclosure} of $X$.
We denote the obvious map $X\to \bar{X}$ by $i$, and the preimage of $\bar{X}$ in $\mathbb{R}^X$ by $\tilde{X}$.
Elements of $\bar{X}$ (and by abuse of notations, also elements of $\tilde{X}$) are called \emph{horofunctions}.
It is a common practice to fix a base point $x\in X$ and to consider the subspace
\[ 
	\tilde{X} \supset \tilde{X}_x=\setdef{h\in \tilde{X}}{ h(x)=0 }.
\]

It is well-known that the horoclosure of $X$ is a compactification of it:

\begin{lemma} \label{lem:horo_inject}
	$\bar{X}$ is a compact metrizable space and the map $i:X\to \bar{X}$ is an injective continuous map.
	For a fixed $x\in X$, the map $\tilde{X}_x\to \bar{X}$ is a homeomorphism.
\end{lemma}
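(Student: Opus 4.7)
The plan is to work with the set $L_x \subset \mathbb{R}^X$ of all $1$-Lipschitz functions $h : X \to \mathbb{R}$ satisfying $h(x) = 0$, identify $\tilde{X}_x$ as a closed subset of $L_x$, and observe that the quotient map restricts to a homeomorphism $\tilde{X}_x \to \bar{X}$.

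First I would show that $L_x$ is compact and metrizable in the pointwise convergence topology. The $1$-Lipschitz condition together with $h(x)=0$ forces $|h(z)| \leq d(x,z)$ for every $z \in X$, so $L_x$ is contained in the compact product $\prod_{z \in X}[-d(x,z), d(x,z)]$. Moreover $L_x$ is closed in $\mathbb{R}^X$, since being $1$-Lipschitz ($|h(z)-h(z')| \leq d(z,z')$ for all $z,z'$) and vanishing at $x$ are closed conditions pointwise. For metrizability, pick a countable dense set $X_0 \subset X$: the restriction map $L_x \to \mathbb{R}^{X_0}$ is a continuous injection (a $1$-Lipschitz function is determined by its values on a dense set) and therefore a homeomorphism onto its image because its domain is compact.

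Next I would identify $\tilde{X}_x$ with $\bar{X}$. The quotient map $q : \mathbb{R}^X \to \mathbb{R}^X/\mathbb{R}\cdot \mathbf{1}$ is continuous and admits the continuous section $s : [h] \mapsto h - h(x)\cdot \mathbf{1}$, whose image is the closed hyperplane $H_x = \{h \in \mathbb{R}^X : h(x) = 0\}$. Thus $q$ restricts to a homeomorphism $H_x \xrightarrow{\sim} \mathbb{R}^X/\mathbb{R}\cdot \mathbf{1}$. Under this homeomorphism, the set $i(X) \subset \mathbb{R}^X/\mathbb{R}\cdot \mathbf{1}$ corresponds to $\{d(\cdot, y) - d(x, y) : y \in X\} \subset H_x$, which lies in $L_x$ by the triangle inequality $|d(z,y) - d(x,y)| \leq d(x,z)$. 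Taking closures, $\tilde{X}_x$ corresponds to $\bar{X}$ and in particular lies in the compact metric space $L_x$. Hence $\bar{X}$ is compact metrizable, and the restriction of $q$ gives the claimed homeomorphism $\tilde{X}_x \to \bar{X}$.

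It remains to verify that $i : X \to \bar{X}$ is injective and continuous. If $d(\cdot, y) - d(\cdot, y') = c \cdot \mathbf{1}$ for some constant $c \in \mathbb{R}$, then evaluating at $y$ and at $y'$ gives $c = -d(y,y')$ and $c = d(y,y')$ respectively, so $y = y'$. For continuity, note that for each $z \in X$ the map $y \mapsto d(z,y)$ is continuous, so $y \mapsto d(\cdot, y)$ is continuous into $\mathbb{R}^X$, and composing with the continuous quotient map yields continuity of $i$. I do not foresee any real obstacle in this argument; the only mildly delicate point is extracting metrizability of $\bar{X}$ from Tychonoff compactness via separability of $X$, and keeping track of the fact that the quotient by the line of constants is well-behaved precisely because evaluation at the base point provides a continuous section.
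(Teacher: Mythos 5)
Your proof is correct and follows essentially the same route as the paper: compactness comes from realizing the relevant functions as a closed subset of $\prod_{z\in X}[-d(x,z),d(x,z)]$, metrizability from restricting to a countable dense subset $X_0\subset X$, and the lemma is completed by identifying $\tilde{X}_x$ with $\bar{X}$. The only (minor) refinements on your side are that you package the closedness and compactness arguments inside the set $L_x$ of $1$-Lipschitz functions vanishing at $x$ and exhibit an explicit continuous section $[h]\mapsto h-h(x)\cdot\mathbf{1}$ of the quotient map, whereas the paper gets the homeomorphism $\tilde{X}_x\to\bar{X}$ by noting it is a continuous bijection from a compact space (implicitly to a Hausdorff one).
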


\begin{proof}
	The fact that $i$ is continuous is obvious.
	For $x\neq y$ in $X$, note that the difference function $d(x,\cdot)-d(y,\cdot)$ is not constant, 
	as it attains different values at $x$ and $y$. Thus $i$ is injective.
	We now fix $x\in X$. First we note that $\tilde{X}_x$ is closed subset of
	\[ 
		\prod_{y\in X} [-d(x,y),d(x,y)] \subset \prod_{y\in X} \mathbb{R}=\mathbb{R}^X, 
	\]
	thus it is compact.
	Fixing a countable dense subset $X_0$ in $X$,
	the obvious map $\tilde{X}_x\to \mathbb{R}^X \to \mathbb{R}^{X_0}$ is a continuous injection 
	(as $\tilde{X}_x$ consists of continuous functions), 
	hence a homeomorphism onto its image.
	The image is a Frechet space, thus metrizable.
	It follows that $\tilde{X}_x$ is metrizable.
	Since the natural map $\tilde{X}_x\to \bar{X}$ is also a continuous bijection, 
	we conclude that it is a homeomorphism
	and deduce that $\bar{X}$ is compact and metrizable.
\end{proof}

Loosely speaking, we often identify $X$ with $i(X) \subset \bar{X}$.
Note, however, that the image of $X$ is in general not open in $\bar{X}$, and the map $i$ is not
a homeomorphism onto its image.

We decompose $\tilde{X}$ as follows.
\[ 
	\begin{split}
	\tilde{X}^b &=\setdef{h\in \tilde{X}}{ f \mbox{ is bounded from below}}, \\
	\tilde{X}^u &=\setdef{h\in \tilde{X}}{ f \mbox{ is unbounded from below}}.
	\end{split}
\]
This decomposition is constant on the fibers of $\tilde{X}\to \bar{X}$, 
thus gives a corresponding decomposition $\bar{X}=\bar{X}^b\cup \bar{X}^u$. 
Clearly we have $i(X) \subseteq \bar X^b$, so that $\bar X^b$ is dense in $\bar X$. 

\begin{lemma}
	The decompositions $\tilde{X}=\tilde{X}^b\cup \tilde{X}^u$ and $\bar{X}=\bar{X}^b\cup \bar{X}^u$ are measurable
	and $\Isom(X)$-equivariant.
\end{lemma}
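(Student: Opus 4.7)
The plan is to first dispose of equivariance, which is essentially tautological, and then to reduce measurability to a countable Borel statement using the fact that horofunctions are Lipschitz and $X$ is separable.

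For equivariance, observe that $\Isom(X)$ acts on $\mathbb{R}^X$ by $(g\cdot h)(y) = h(g^{-1}y)$, which takes the function $d(\cdot,x)$ to $d(\cdot,gx)$, and this action preserves the constant line $\mathbb{R}\cdot\mathbf{1}$, hence descends to an action on $\bar{X}$. Since $g^{-1}$ is a bijection of $X$, we have $\inf_{y\in X}(g\cdot h)(y) = \inf_{y\in X} h(y)$, so the property ``bounded from below'' is preserved. Moreover, adding a constant to $h$ changes $\inf h$ by that constant but does not affect whether $\inf h > -\infty$, so the decomposition of $\tilde X$ descends to a well-defined decomposition of $\bar X$, and this descended decomposition is $\Isom(X)$-equivariant.

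For measurability, the key observation is that every element of $\tilde X$ is continuous on $X$. Indeed, each function $d(\cdot,x_n) - c_n$ is $1$-Lipschitz, and a pointwise limit of $1$-Lipschitz functions is $1$-Lipschitz; thus, after fixing a basepoint $x$, every element of $\tilde X_x$ is $1$-Lipschitz, and every element of $\tilde X$ differs from such a function by a constant. Now fix a countable dense subset $X_0\subset X$. By continuity,
\[
    \inf_{y\in X} h(y) = \inf_{y\in X_0} h(y) \qquad\text{for every } h\in\tilde X.
\]
Therefore
\[
    \tilde X^b = \bigcup_{N\in\mathbb{N}}\bigcap_{y\in X_0} \{h\in\tilde X : h(y)\ge -N\},
\]
which is a countable union of countable intersections of closed sets in the pointwise-convergence topology on $\mathbb{R}^X$ (each evaluation $h\mapsto h(y)$ is continuous by definition). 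Hence $\tilde X^b$, and consequently $\tilde X^u$, is Borel in $\tilde X$.

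Finally, to transfer measurability to $\bar X$, use the homeomorphism $\tilde X_x \to \bar X$ of Lemma~\ref{lem:horo_inject}: the decomposition of $\bar X$ is the image of the decomposition $\tilde X_x = \tilde X_x^b\cup \tilde X_x^u$, where $\tilde X_x^b = \tilde X^b\cap \tilde X_x$ is Borel in $\tilde X_x$ by the argument above. There is no real obstacle here; the only subtlety is the passage from the uncountable condition ``$h(y)\ge -N$ for all $y\in X$'' to a countable one, which is handled once and for all by the continuity of horofunctions together with separability of $X$.
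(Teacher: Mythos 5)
Your proof is correct and takes essentially the same route as the paper: continuity of horofunctions plus a countable dense subset $X_0\subset X$ reduces the boundedness condition to countably many evaluation conditions (you express $\tilde X^b$ as a countable union of countable intersections of closed sets, while the paper equivalently writes $\tilde X^u=\bigcap_n\bigcup_{x\in X_0}\{h : h(x)\le -n\}$), and measurability is transferred to $\bar X$ via the homeomorphism $\tilde X_x\to\bar X$ of Lemma~\ref{lem:horo_inject}. The additional details you spell out---the explicit $\Isom(X)$-action, the $1$-Lipschitz property of horofunctions, and well-definedness on the fibers of $\tilde X\to\bar X$---are points the paper dismisses as obvious or handles in the text preceding the lemma, so there is no substantive difference.
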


\begin{proof}
	The equivariance of the decompositions is obvious.
	Fix a dense countable subset $X_0$ in $X$ and use the fact that $\tilde{X}$ consists of continuous functions to note that
	\[ 
		\tilde{X}^u=\bigcap_{n\in \mathbb{N}}\bigcup_{x\in X_0} 
		\setdef{h\in \tilde{X}}{ h(x)\leq -n },
	\]
	thus $\tilde{X}^u \subset \tilde{X}$ is measurable.
	Fixing $x\in X$, using the measurability of $\tilde{X}^u_x$, we observe that $\bar{X}^u\subset \bar{X}$ is measurable.
\end{proof}

\subsection{The horoclosure of a hyperbolic metric space}

We now assume, in addition, that the  separable metric space $(X,d)$ is geodesic and Gromov hyperbolic 
(as before, it is  possibly non-proper), and that there is a $\Gamma$-action on it for some group $\Gamma$.

The following lemma will allow us to use coarse metric ergodicity in the setting of Theorem \ref{T:main} to exclude the existence of $\Gamma$-maps to $\bar{X}^b$.

\begin{lemma}\label{lem:horo_to_sets}
There is a Borel coarsely separable pseudo-metric on $\bar{X}^b$ on which $\Gamma$ acts by isometries. Moreover, if the $\Gamma$-action on $X$ is unbounded, then the $\Gamma$-action on $\bar{X}^b$ is unbounded.
\end{lemma}

\begin{proof}
We will show the analogous statement for $\tilde{X}^b$ rather than $\bar{X}^b$, and we note that the pseudo-metric we construct below passes to $\bar{X}^b$ (since the sets $\tilde{I}(h)$ below do not change when adding a constant to $h$).

We first show that the function
\[ 
    \inf:\tilde{X}^b\to \mathbb{R}, \quad h\mapsto \inf\setdef{h(x)}{x\in X} 
\]
is measurable and $\Isom(X)$-invariant, and that for every $h\in \tilde{X}^b$,
the set
\[ 
    \tilde{I}(h)=\setdef{x\in X}{h(x)< \inf(h)+1}
\]
is bounded in $X$.

To see that $\inf$ is measurable, we fix a dense countable subset $X_0$ in $X$ and we use the continuity of the functions in $\tilde{X}^b$ to observe that
\[ 
    \inf(h)=\inf\setdef{h(x)}{x\in X_0}. 
\]
The invariance of this function under $\Isom(X)$ is clear.

Fix $h \in \tilde{X}^b$.
We now argue that $\tilde{I}(h)$ is of diameter bounded by $C=8+4\delta$,
where $\delta$ is the hyperbolicity constant associated with the thin triangle property of $X$.
Without loss of generality we assume that $\inf(h)=0$.
Assuming the negation, we fix two points $x,x'$ satisfying
$d(x,x')>C$ and $h(x),h(x')<1$.
We consider a finite sequence of points $x_0,x_1,\ldots,x_n$ on a geodesic
segment from $x$ to $x'$ such that $x_0=x$, $x_n=x'$ and $d(x_i,x_{i+1})<1$.
We consider the image of $h$ in $\bar{X}$ along with its neighborhood
given by
\[ 
	U=\setdef{f+\mathbb{R}\cdot\mathbf{1}}{ f\in \mathbb{R}^X,~\forall 0\leq i,j\leq n,~
	|\left(f(x_i)-f(x_j)\right)-\left(h(x_i)-h(x_j)\right)|<1}. 
\]
We fix a point $y\in X$ whose image in $\bar{X}$ is in $U$.
We thus have:
\begin{equation} \label{eq:ij}
	\forall 0\leq i,j \leq n, \quad
	|d(y,x_i)-h(x_i)-d(y,x_j)+h(x_j)|<1.
\end{equation}
We consider geodesic segments from $y$ to $x$ and from $y$ to $x'$ and, using that $x,x'$ and $y$ 
are the vertices of a thin triangle, we
fix $i$ such that $x_i$ lies at distance at most $1+\delta$ from these segments.
Thus
\[ 
	\begin{split}
		&d(y,x_i)+d(x_i,x)\leq d(y,x)+2+2\delta,\\
		&d(y,x_i)+d(x_i,x')\leq d(y,x')+2+2\delta.
	\end{split}
\]
Note that $d(x,x_i)+d(x_i,x')=d(x,x')$.
Upon possibly interchanging the roles of $x$ and $x'$, we will assume that $d(x,x_i)\geq d(x,x')/2$.
In particular, $d(x,x_i)\geq C/2$.
Taking $j=0$ in Equation~(\ref{eq:ij}), we now have
\begin{align*}
 	0 &=  \inf(h) \\
 	& \leq h(x_i) \\
 	& < 1+d(y,x_i)+h(x)-d(y,x)  \\
	&\leq   1+\left(d(y,x)+2+2\delta- d(x_i,x)\right)+h(x)-d(y,x)\\
	&< (3+h(x))+2\delta-d(x_i,x) \\
	& \leq  4+2\delta-C/2 =0.
\end{align*}

This is a contradiction, thus indeed the diameter of $\tilde{I}(h)$ is bounded by $C$.

We can now define the required pseudo-metric $\rho$ by setting
$$\rho(h,h')=d_{Haus}(\tilde{I}(h), \tilde{I}(h')).$$
This is the pull-back of a metric, so it is a pseudo-metric, and $\Gamma$ clearly acts on it by isometries. The fact that the $\tilde{I}(\cdot)$ are bounded implies the claim on unboundedness of the action. Moreover, coarse separability follows from separability of $X$ and (uniform) boundedness of the $\tilde{I}(\cdot)$. What is left to show is that $\rho$ is Borel, and in order to do so we have to show that for all $r\in \mathbb R$ we have that $\Delta_r=\{(h,h'): \rho(h,h')> r\}\subseteq \tilde{X}^b \times \tilde{X}^b$ is Borel. Expanding the definitions, $\Delta_r$ is the set of pairs $(h,h')$ such that $\tilde{I}(h)$ and $\tilde{I}(h')$ lie at Hausdorff distance more than $r$. In turn, this means that there exists $x\in \tilde{I}(h)\cap X_0$ such that all $y\in \tilde{I}(h')\cap X_0$ lie at distance more than $r$ from $x$, or that the same holds switching the roles of $h$ and $h'$ (to justify the ``$\cap X_0$'' note that $\tilde{I}(h)\cap X_0$ is dense in $\tilde{I}(h)$ in view of the continuity of $h$, and similarly for $h'$). Denoting $\tilde{J}(z)=\{k\in \tilde{X}^b: z\in \tilde{I}(k)\}$ for $z\in Z$, the first case is further equivalent to $h'$ not lying in $\bigcup_{y\in X_0 : d(x,y)\leq r} \tilde{J}(y)$, which is the subset of $\tilde{X}^b$ of all $k$ such that $\tilde{I}(k)$ contains some $y\in X_0$ within distance $r$ of $x$ (the other case is similar).
Hence, we have
$$\Delta_r=\bigcup_{x\in X_0}\left( \tilde{J}(x) \times \left(\tilde{X}^b-\bigcup_{y\in X_0 : d(x,y)\leq r}\tilde{J}(y)\right) \right)\cup$$
$$\bigcup_{y\in X_0} \left( \left(\tilde{X}^b-\bigcup_{x\in X_0 : d(x,y)\leq r} \tilde{J}(y)\right)\times \tilde{J}(x)\right).$$
Hence, $\Delta_r$ is Borel provided that all $\tilde J(z)$ are Borel. But $\tilde J(z)=\{k\in \tilde{X}^b: k(z)<\inf(k)+1\}$, so it is measurable in view of the measurability of $\inf$, as required.
\end{proof}

\subsection{The Gromov boundary}\label{sec:Gromov-bd}

While the construction of the Gromov boundary $\partial X$ is fairly standard,
it is commonly taken under a properness assumption on $X$.
In preparation for our more general discussion we review this construction (for $X$ possibly non-proper) below, and we also relate it to horoclosures.
As common, we fix from now on a base point $o\in X$.
For $x\in X$ we use the shorthand notation $|x|=d(o,x)$.
Gromov products will be taken, unless otherwise stated, with respect to $o$.
That is, for $x,y\in X$ we set
\[ 
	(x,y)=\half\left(|x|+|y|-d(x,y)\right).
\]
In our discussion below we fix $\delta>0$ such that for every $x,y,z\in X$ we have
\[ 
	(x,z) \geq \min\{(x,y),(y,z)\}-\delta. 
\]
We recall that a sequence of points $(x_n)$ in $X$ is said to {\em converge to infinity} if the real numbers $(x_n,x_m)$ 
approach infinity when both indices $m$ and $n$ tend to infinity.

\begin{lemma}
Assume $(x_n)$ is a sequence of points in $X$ which converges in $\bar{X}$
and denote $\bar{h}=\lim x_n$.
Then $(x_n)$ converges to infinity if and only if $\bar{h}\in \bar{X}^u$.
In that case, if $(x'_n)$ is another sequences in $X$ satisfying
$\lim x'_n =\bar{h}$
then $(x_n,x'_n) \to \infty$.
\end{lemma}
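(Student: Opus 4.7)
The plan is to fix the normalized representative $h_x(y) := d(y,x) - |x|$, so that $h_x(o) = 0$ and $h_x \in \tilde{X}_o$. By the definition of convergence in $\bar{X} = \mathbb{R}^X/\mathbb{R}\cdot\mathbf{1}$, the hypothesis $\lim x_n = \bar{h}$ translates, after normalizing at $o$, into pointwise convergence $h_{x_n}(y) \to h(y)$ on $X$, where $h$ is the $h(o)=0$ representative of $\bar{h}$. Since $h_x(y) \geq -|x|$ with equality at $y=x$, the infimum of $h_x$ equals $-|x|$; hence a bounded subsequence of $(|x_n|)$ would force $h$ to be bounded from below, and so $\bar{h} \in \bar{X}^u$ already implies $|x_n| \to \infty$.

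For the easier direction $\bar{h} \in \bar{X}^u \Rightarrow (x_n) \to \infty$, I would use only the triangle inequality. Writing $d(y,x_n) = |x_n| + h_{x_n}(y)$, one obtains
\[
   d(x_n,x_m) \leq d(x_n,y) + d(y,x_m) = |x_n|+|x_m|+h_{x_n}(y)+h_{x_m}(y),
\]
so $(x_n,x_m) \geq -\frac{1}{2}(h_{x_n}(y)+h_{x_m}(y))$. Passing to the limit gives $\liminf_{n,m}(x_n,x_m) \geq -h(y)$ for every $y \in X$; since $h$ is unbounded from below, $\sup_y(-h(y)) = \infty$, hence $(x_n,x_m) \to \infty$. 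The same computation applied to a second sequence $(x'_n)$ with $\lim x'_n = \bar{h}$, which similarly yields $h_{x'_n}(y) \to h(y)$, gives $\liminf_n (x_n, x'_n) \geq -h(y)$ for every $y$ and settles the ``moreover'' clause by the same supremum argument.

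The converse direction $(x_n) \to \infty \Rightarrow \bar{h} \in \bar{X}^u$ is the step where Gromov hyperbolicity genuinely enters. I would aim to produce, for any $K > 0$, a single point $y \in X$ with $h(y) \leq -K + O(\delta)$. The input is the standard fellow-travelling consequence of $\delta$-hyperbolicity: there exists $\delta' = \delta'(\delta)$ such that whenever $(x,x')_o \geq R+\delta'$, any geodesic segments $\gamma,\gamma'$ from $o$ to $x$ and $x'$ satisfy $d(\gamma(R),\gamma'(R)) \leq \delta'$. The assumption $(x_n,x_m) \to \infty$ applied diagonally gives $|x_n| = (x_n,x_n) \to \infty$; pick $N$ with $|x_N| \geq K$ and $(x_N,x_n) \geq K+\delta'$ for all $n \geq N$, fix a geodesic from $o$ to $x_N$, and let $y$ be its point at distance $K$ from $o$. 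Fellow-travelling supplies, on any geodesic from $o$ to $x_n$, a point within $\delta'$ of $y$ and at distance $K$ from $o$, so that $d(y,x_n) \leq |x_n| - K + \delta'$ and hence $h_{x_n}(y) \leq -K + \delta'$. Letting $n \to \infty$ yields $h(y) \leq -K+\delta'$, and arbitrariness of $K$ concludes.

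The main obstacle is precisely this last step: the forward direction and the ``moreover'' clause only require the triangle inequality and the pointwise definition of the horofunction limit, whereas exhibiting a single $y \in X$ witnessing the unboundedness of $h$ compels one to compare the geodesics $[o,x_n]$ for different $n$, and this comparison is exactly what $\delta$-hyperbolicity provides via fellow-travelling.
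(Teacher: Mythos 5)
Your proof is correct, and it improves on the paper's in one direction while matching it in the other. For the implication $\bar{h}\in\bar{X}^u\Rightarrow(x_n)$ converges to infinity, and for the ``moreover'' clause, the paper fixes $x$ with $h(x)<-r$, deduces $(x_n,x)>r/2$, and then invokes the hyperbolicity (four-point) inequality $(x_n,x_m)\ge\min\{(x_n,x),(x,x_m)\}-\delta$; you instead bound $(x_n,x_m)\ge-\frac{1}{2}\left(h_{x_n}(y)+h_{x_m}(y)\right)$ directly from the triangle inequality, obtaining $\liminf(x_n,x_m)\ge-h(y)$ for every $y$, with no $\delta$ and no factor-of-two loss. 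This is a genuine structural difference: your argument shows that this half of the lemma, and the equivalence-of-sequences statement, hold in an arbitrary separable metric space, so hyperbolicity is needed only for the converse --- a point the paper's proof obscures. For the converse, the two arguments share the same geometry: both produce a point at distance $K$ (resp.\ $r$) from $o$ along a geodesic toward a far-out $x_N$ (resp.\ $x_m$) and show $h$ is roughly $-K$ there. The paper does this in one line from the four-point condition, using the identity $d(x_n,x)-|x_n|=|x|-2(x_n,x)$ together with $(x_n,x)\ge\min\{(x_n,x_m),(x_m,x)\}-\delta$, whereas you route it through the standard fellow-travelling lemma for geodesics issuing from a common basepoint; that lemma is itself a consequence of thin triangles (with, e.g., $\delta'=4\delta$, since $(\gamma(t),\gamma'(t))\ge\min\{t,(x,x')\}-2\delta$), so your route is equivalent but invokes slightly heavier machinery. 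Two minor remarks: your observation that $\bar{h}\in\bar{X}^u$ forces $|x_n|\to\infty$ is correct but never used, since you later rederive it from the diagonal $(x_n,x_n)=|x_n|$; and in the fellow-travelling step you should note that $\gamma_n(K)$ is defined because $K+\delta'\le(x_N,x_n)\le|x_n|$. Finally, your normalization $h_{x_n}(y)=d(y,x_n)-|x_n|$ is the right one; the paper's line ``$d(x_n,x)-|x|\to h(x)$'' contains a typo ($|x|$ should be $|x_n|$), as its subsequent computations confirm.
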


\begin{proof}
We will denote the lift of $\bar{h}$ in $\tilde{X}_o$ by $h$ and show that
$(x_n)$ converges to infinity if and only if $h\in \tilde{X}^u_o$.
Note that for every $x\in X$, we have $d(x_n,x)-|x|\to h(x)$.

Assuming first $(x_n)$ converges to infinity, we will show that $h\in \tilde{X}^u_o$.
Fix $r>0$.
Fix $N$ such that for $n,m>N$, $(x_n,x_m)>r$.
Fix $m>N$, note that $|x_m|\geq r$ and let $x$ be a point on a geodesic segment from $o$ to $x_m$
with $|x|=r$.
Then by hyperbolicity,
\[ (x_n,x) \geq \min\{(x_n,x_m),(x_m,x)\}-\delta = r-\delta, \]
Thus
\[ h(x)=\lim_{n\to \infty} \left(d(x_n,x)-|x_n|\right)=
\lim_{n\to \infty} \left(|x|-2(x_n,x)\right) \leq 2\delta-r. \]
As $r$ was arbitrary,  we get indeed that $h\in \tilde{X}^u_o$.

Assuming now $h\in \tilde{X}^u$, we will show that
$(x_n)$ converges to infinity.
Fix $r>0$. Fix $x$ such that $h(x)<-r$.
Fix $N$ such that for every $n>N$, $d(x_n,x)-|x_n|<-r$
and observe that for such $n$,
\[ 
    (x_n,x)=\half\left(|x_n|+|x|-d(x_n,x)\right)\geq -\half\left(d(x_n,x)-|x_n|\right) 
    > \half r. 
\]
Then by hyperbolicity, for $n,m>N$,
\[ 
    (x_n,x_m) \geq \min\{(x_n,x),(x,x_m)\}-\delta > \half r-\delta.
\]
As $r$ was arbitrary,  we deduce indeed that the sequence $(x_n)$ converges to infinity.

In the setting of the former paragraph,
if $(x'_n)$ is another sequences in $X$ satisfying $x'_n \to \bar{h}$,
fixing $N'\geq N$ such that for every $n>N'$, $d(x'_n,x)-|x'_n|<-r$, the same computation shows that
$(x_n,x'_n) > r/2-\delta$.
Thus indeed, we obtain $(x_n,x'_n) \to \infty$.
\end{proof}

Two sequences which converge to infinity, $(x_n)$ and $(y_n)$, are said to be \textit{equivalent} if $(x_n,y_n) \to \infty$.
We conclude that if $(x_n)$ and $(x'_n)$ are two sequences in $X$ satisfying
\[ 
    \lim_{n\to\infty} x_n = \lim_{n\to\infty} x'_n \in \bar{X}^u 
\]
then $(x_n)$ is equivalent to $(x'_n)$.

A point in $\partial X$ is, by definition, an equivalence class of sequences which converge to infinity.
We denote by $\pi$ the unique
map 
$$\pi:\bar{X}^u\to \partial X$$
satisfying
\[ 
	\lim_{n\to\infty} x_n \in \bar{X}^u \qquad \Longrightarrow 
	\qquad \pi\left(\lim_{n\to\infty} x_n\right)=[x_n]. 
\]
For a point $\xi\in \partial X$ and $r>0$ we set
\[ 
	U(\xi,r)=\setdef{\eta\in \partial X}{ \sup\setdef{
	\liminf_{n\to\infty} (x_n,x'_n)}{(x_n)\in\xi,~(x'_n)\in \eta} \geq r}. 
\]
We note that the collection of sets $U(\xi,r)$ forms a basis for a topology and endow $\partial X$ with this topology. In fact, one can extend the sets $U(\xi,r)$ to $X\cup\partial X$, and define a topology there using the extended sets as bases.

\begin{lemma}
The map $\pi$ is continuous and $\Isom(X)$-equivariant.
\end{lemma}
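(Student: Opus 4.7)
\textbf{Equivariance} is immediate from the construction: if $g\in \Isom(X)$ and $\bar h = \lim x_n$ in $\bar X$ with $\bar h\in \bar X^u$, then $g\bar h = \lim gx_n$, so $\pi(g\bar h) = [gx_n] = g[x_n] = g\pi(\bar h)$.

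For \textbf{continuity}, the plan is to fix $\bar h\in \bar X^u$ and $r>0$ and to exhibit an explicit open neighborhood $V\subseteq \bar X$ of $\bar h$ with $\pi(V\cap \bar X^u)\subseteq U(\pi(\bar h),r)$. The key idea is to exploit that unboundedness of $h$ from below forces the Gromov product $(x_n,x)$ of any representative sequence $(x_n)\to \bar h$ to explode at a well-chosen point $x\in X$, which is exactly the estimate carried out at the end of the preceding lemma. Concretely, after identifying $\bar h$ with its lift $h\in \tilde{X}^u_o$ via the homeomorphism $\tilde{X}_o\cong \bar X$ of Lemma~\ref{lem:horo_inject}, I would pick $x\in X$ with $h(x)<-2(r+\delta)-1$ and set $V=\{\bar h'\in \bar X : |h'(x)-h(x)|<1\}$; this is open because $\tilde{X}_o$ carries the topology of pointwise convergence. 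Any $\bar h'\in V\cap \bar X^u$ then has lift $h'$ satisfying $h'(x)<-2(r+\delta)$, so any representative sequence $(x'_n)\to \bar h'$ eventually satisfies $d(x'_n,x)-|x'_n|<-2(r+\delta)$. Repeating the computation of the preceding lemma at the point $x$ yields $(x_n,x)>r+\delta$ and $(x'_n,x)>r+\delta$ for $n$ large, and $\delta$-hyperbolicity applied to the triple $(x_n,x,x'_n)$ gives
\[
    (x_n,x'_n)\geq \min\{(x_n,x),(x,x'_n)\}-\delta\geq r.
\]
Hence $\liminf_{n\to\infty}(x_n,x'_n)\geq r$, which witnesses $\pi(\bar h')\in U(\pi(\bar h),r)$.

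The only mild conceptual subtlety is that $U(\xi,r)$ is defined by a supremum over \emph{all} representatives of each equivalence class, while the argument above produces just one pair of representatives. This is harmless because the preceding lemma identifies all sequences converging in $\bar X$ to the same element of $\bar X^u$ as equivalent, so a single pair of witnesses already certifies the supremum lower bound in the definition of $U(\xi,r)$. No further obstacle is expected: the geodesic hypothesis is not used in this step, and the potential non-properness of $X$ plays no role once the argument is carried out inside the horoclosure $\bar X$.
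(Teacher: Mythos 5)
Your proof is correct and follows essentially the same route as the paper: both fix $x$ with $h(x)$ very negative, take the open neighborhood in $\bar X$ determined by the value of horofunctions at $x$ (the paper uses $\{h' : h'(x)<-t\}$ with $t=2(r+\delta)$ rather than your symmetric condition $|h'(x)-h(x)|<1$, an immaterial difference), and apply the Gromov-product estimate plus $\delta$-hyperbolicity to get $(x_n,x'_n)\geq r$. Your closing remark about the supremum is also fine, though slightly overstated: a single witnessing pair of representatives bounds the supremum from below automatically, and the preceding lemma is only needed to know that sequences converging in $\bar X$ to $\bar h'$ do represent the class $\pi(\bar h')$.
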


\begin{proof}
The equivariance of $\pi$ is obvious.
In order to show continuity,
we fix a point $\bar{h}\in \bar{X}^u$ and show the continuity of $\pi$ at $\bar{h}$.
Thus
we fix $r>0$ and argue to show that there exists a neighborhood $V$ of $\bar{h}$ in $\bar{X}^u$ such $\pi(V)\subset U(\pi(\bar{h}),r)$.
We will denote the lift of $\bar{h}$ in $\tilde{X}^u_o$ by $h$,
set $t=2(r+\delta)$
and
fix a point $x\in X$ such that $h(x)<-t$.
We let $V\subset \bar{X}^u_o$ be the open neighborhood of $\bar{h}$ corresponding to the set $\{h'\in \tilde{X}^u_o \mid h'(x)<-t\}$.
Fix $\bar{h}'\in V$ and denote its lift in $\tilde{X}_o^u$ by $h'$.
Let $(x_n)$ and $(x'_n)$ be sequences in $X$ converging to $\bar{h}$ and $\bar{h}'$ respectively.
In particular, we have 
\[
    h(x) =\lim_{n\to\infty} \left(d(x_n,x)-|x_n|\right)
    \qquad \text{and} \qquad
    h'(x) =\lim_{n\to\infty} \left(d(x'_n,x)-|x'_n|\right).
\]
Fix $N$ such that for every $n>N$ both $\left(d(x_n,x)-|x_n|\right)<-t$ and $\left(d(x'_n,x)-|x'_n|\right)<-t$.
Note that for $n>N$
\[ 
    (x_n,x)=\half\left(|x_n|+|x|-d(x_n,x)\right)\geq -\half \left(d(x_n,x)-|x_n|\right) 
    > \half t 
\]
and similarly $(x'_n,x)>t/2$.
Thus we have
\[ 
    (x_n,x'_n)\geq  \min\{(x_n,x),(x,x'_n)\}-\delta > \half t-\delta=r. 
\]
It follows that $\liminf (x_n,x'_n)\geq r$
and in particular,
\[ 
    \sup\setdef{\liminf_{n\to\infty} (x_n,x'_n)}{(x_n)\in\pi(\bar{h}),~(x'_n)\in \pi(\bar{h}')} \geq r. 
\]
Therefore $\pi(\bar{h}')\in U(\pi(\bar{h}),r)$, and 
we conclude that indeed $\pi(V)\subset U(\pi(\bar{h}),r)$.
\end{proof}

Here and in the sequel, we use the term  `\textit{$\Gamma$}-map' to mean a measurable $\Gamma$-equivariant map between spaces equipped with a $\Gamma$-action.

The following lemma will be used, similarly to Lemma \ref{lem:horo_to_sets}, in conjunction with coarse metric ergodicity to exclude the existence of $\Gamma$-maps to $\partial X^{(3)}$.

\begin{lemma}\label{lem:triples_to_sets}
    Let $\Gamma$ be a countable group acting by isometries on the separable hyperbolic space $X$. Then there is a Borel coarsely separable pseudo-metric $\rho$ on $\partial X^{(3)}$ such that the natural $\Gamma$-action on $\partial X^{(3)}$ is an action by isometries of $\rho$. Moreover, if $\mathcal U\subseteq \Gamma$ acts on $X$ with unbounded orbits, then the $\mathcal U$-orbits in $(\partial X^{(3)},\rho)$ are also unbounded.
\end{lemma}

\begin{proof}
    Denote $\mathcal T=\partial X^{(3)}$ for convenience. Fix a $\Gamma$-invariant dense subset $A$ of $X$.

We claim that there exists a $\Gamma$-equivariant map $\tau:\mathcal T\to \bdd(A)$ with the property that for any $x\in X$ we have that $\mathcal T_x=\tau^{-1}(\{B\in\bdd(X): x\in B\})\subseteq \mathcal T$ is closed.

To show the claim, we will in fact first define a closed subset $\mathcal T_x$ for each $x\in A$ and then set $\tau(t)=\{x\in A: t\in \mathcal T_x\}$. Let $\delta>0$ be a hyperbolicity constant for $X$.
For $x\in A$, let $T_x\subseteq X^3$ be the set of all triples $(x_1,x_2,x_3)$ such that $d(x_i,x_j)> d(x_i,x)+d(x,x_j)-10\delta$ for all distinct $i,j\in\{1,2,3\}$.
We then define $\mathcal T_x$ to be the intersection of $\mathcal T$ with closure of $T_x$ (taken in $(X\cup\partial X)^3$).
Notice that $\tau(t)$ defined as above is then a bounded, non-empty subset of $A$, and that $\tau$ is $\Gamma$-equivariant.

We can now pull-back the Hausdorff distance $d_{Haus}$ on $\bdd(A)$ to $\mathcal T$, that is, we set $\rho(t,t')=d_{Haus}(\tau(t),\tau(t'))$. Clearly, $\rho$ is a pseudo-metric and $\Gamma$ acts on $\mathcal T$ by isometries of $\rho$. Moreover, orbits are unbounded since $(\mathcal T,\rho)$ is $\Gamma$-equivariantly isometrically embedded in $\bdd(A)$, and on the latter the $\Gamma$-orbits are unbounded since orbits are unbounded in $X$ (a similar argument applies to subsets of $\Gamma$). Coarse separability also from this isometric embedding together with separability of $X$, which gives coarse separability of $\bdd(A)$. It remains to show that $\rho$ is Borel. In order to do so, it suffices to show that for all $r\in \mathbb R$ we have that $\Delta_r=\{(t,t'): \rho(t,t')> r\}\subseteq \mathcal T \times \mathcal T$ is Borel. Expanding the definitions, $\Delta_r$ is the set of pairs $(t,t')$ such that $\tau(t)$ and $\tau(t')$ lie at Hausdorff distance more than $r$. In turn, this means that there exists $x\in \tau(t)$ such that all $y\in \tau(t')$ lie at distance more than $r$ from $x$, or that the same holds switching the roles of $t$ and $t'$. The first case is further equivalent to $t'$ not lying in $\bigcup_{y\in A : d(x,y)\leq r} \mathcal T_y$, so that $\tau(t)$ does not contain any $y$ with $d(x,y)\leq r$ (and the other case is similar).
Hence, we have
$$\Delta_r=\bigcup_{x\in A}\left( \mathcal T_x \times \left(\mathcal T-\bigcup_{y\in A : d(x,y)\leq r} \mathcal T_y\right) \right)\cup\bigcup_{y\in A} \left( \left(\mathcal T-\bigcup_{x\in A : d(x,y)\leq r} \mathcal T_y\right)\times \mathcal T_x\right).$$
Since all $\mathcal T_x$ are closed, $\Delta_r$ is Borel, as required.
\end{proof}

\subsection{Atom-less measures.} \label{subsec:MC}

We now show a result needed in the next section, but not needed for the proof of Theorem \ref{thm:boundarymap}; we include it here since we established the setup for its proof.

The result provides a $\Gamma$-map from atom-less probability measures on $\partial X$ to probability measures on $\partial X^{(3)}$.

Given a hyperbolic space $X$, denote by $\Prob_c(\partial X)$ the set of
	all atom-less probability measures on the standard Borel space $\partial X$.

	\begin{lemma}
	\label{lem:boundary_to_bounded}
	 Let $\Gamma$ be a countable group acting by isometries on the hyperbolic space $X$. Then there is a $\Gamma$-map
	\[
		\Psi:\Prob_c(\partial X)\overto{}\Prob(\partial X^{(3)}).
	\]

 Moreover, for all $\mu,\nu\in \Prob_c(\partial X)$ with $\mu$ absolutely continuous with respect to $\nu$ we have
 $$\left\| \frac{d\Psi(\mu)}{d\Psi(\nu)}\right\|_\infty\leq \left(\left\|\frac{d\mu}{d\nu}\right\|_\infty\right)^3.$$
	\end{lemma}

	\begin{proof}
	We have a $\Gamma$-map
	\[
		\Phi:\Prob_c(\partial X)\overto{}\Prob(\partial X^3),\qquad \mu\mapsto \mu\times\mu\times\mu.
	\]
	In fact, the assumption that $\mu$ has no atoms on a space $\partial X$ implies that $\mu\times\mu\times\mu$
	gives zero mass to the diagonals in $\partial X \times \partial X\times \partial X$, and so is fully supported on $\partial X^{(3)}$.
We thus get a well defined map
\[		\Prob_c(\partial X)\overto{}\Prob(\partial X^{(3)}),\qquad \mu\mapsto \mu\times\mu\times\mu, \]
as required.

 In order to prove the moreover part, let $\mu,\nu\in \Prob_c(\partial X)$ and let $f\in L^\infty(\partial X)$ be the Radon-Nikodym derivative of $\mu$ with respect to $\nu$. Then $(x,y,z)\mapsto f(x)f(y)f(z)$ is the Radon-Nikodym derivative of $\Phi(\mu)$ with respect to $\Phi(\nu)$, so that $\|\frac{d\Phi(\mu)}{d\Phi(\nu)}\|_\infty= (\|\frac{d\mu}{d\nu}\|_\infty)^3$. The measures $\Psi(\mu)$ and $\Psi(\nu)$ are push-forward measures of $\Phi(\mu)$ and $\Phi(\nu)$, and pushforwards do not increase the $L^\infty$ norm of Radon-Nikodym derivatives, so we obtain the required bound.
\end{proof}

\subsection{Proof of Theorem \ref{thm:boundarymap}}
We are now ready to prove Theorem \ref{thm:boundarymap}.
By hypothesis, the group $\Gamma$ is a countable group acting by isometries on the hyperbolic space $X$, without fixing any bounded subset of $X$, any point of $\partial X$ and any pair of points of $\partial X$.

We start with some preliminary claims that we will use a few times.

First of all, we exclude the existence of $\Gamma$-maps with various targets.

\begin{claim}\label{Claim:no-bdd}
We have
\begin{align*}
    \Map_\Gamma(B_-,\Prob(\bar X^b)) 
    & =\Map_\Gamma(B_+,\Prob(\bar X^b))\\
    &=\Map_\Gamma(B_-\times B_+,\Prob(\partial X^{(3)}))\\
    &=\varnothing.
\end{align*}
\end{claim}

\begin{proof}
Recall Lemma \ref{lem:horo_to_sets} and Lemma \ref{lem:triples_to_sets} about the existence of Borel coarsely separable pseudo-metrics on $\bar X^b$ and on $\partial X^{(3)}$ respectively, on which $\Gamma$ acts by isometries with unbounded orbits. Then, in view of Proposition~\ref{prop:maps_to_prob}, all sets of $\Gamma$-maps in the statement need to be empty (note that any pseudo-metric is a coarse metric).
\end{proof}

We shall make use of the following general observation with $\Omega$ being $B_-$, $B_+$, or $B_-\times B_+$. 
Note that these Lebesgue $\Gamma$-spaces are ergodic.

\begin{claim}\label{claim:decomp}
    Let $\Gamma\acts \Omega$ be an ergodic action, and $\Gamma\acts V$ be a Borel action on 
    a Polish space. Suppose $V=V_0\cup V_1$ with $V_0\cap V_1=\varnothing$ be a decomposition
    into $\Gamma$-invariant Borel sets, and suppose $\Map_\Gamma(\Omega,\Prob(V_0))=\varnothing$.
    Then the inclusion $\Prob(V_1)\subset\Prob(V)$ gives an identification
    \[
        \Map_\Gamma(\Omega,\Prob(V))=\Map_\Gamma(\Omega,\Prob(V_1)).
    \]
\end{claim}
 \begin{proof}
     Consider a measurable $\Gamma$-equivariant map $\phi\colon\Omega\overto{}\Prob(V)$. We also define  $f_0,f_1\colon\Omega\to[0,1]$ by setting $f_i(\omega)=\phi(\omega)(V_i)$.
     These are $\Gamma$-invariant measurable functions. By ergodicity $f_i(\omega)=c_i$ constants,
     and $c_i\ge 0$ with $c_0+c_1=1$. Assuming $c_0>0$ one obtains a contradiction, 
     because the normalized
     restriction
     $\phi_0(\omega)=c_0^{-1}\cdot \phi(\omega)|_{V_0}$ is a $\Gamma$-map to $\Prob(V_0)$, 
     which are ruled out by assumption. Hence $c_0=0$
     and $\phi(\omega)$ gives full mass to $V_1$, hence $\phi$ can be considered as a $\Gamma$-map
     $\Omega\overto{}\Prob(V_1)$.
 \end{proof}

\begin{claim}\label{claim:2-diagonal}
    For all $\psi_-\in\Map(B_-,\Prob(\partial X))$ and $\psi_+\in\Map(B_+,\Prob(\partial X))$, 
    the diagonal $\Delta=\setdef{(t,t)}{t\in\partial X}\subset \partial X^2$ satisfies
    \[
        \psi_-(x)\times \psi_+(y)(\Delta)=0
    \]
    for a.e. $(x,y)\in B_-\times B_+$.
\end{claim}
\begin{proof}
    Given a probability measure $\eta\in\Prob(\partial X)$ denote by $\Atm(\eta)$
    the set of atoms of $\eta$; and for $\epsilon>0$ by 
    $\Atm_\epsilon(\eta)=\setdef{a}{\eta(\{a\})\ge \epsilon}$ the
    subset of atoms with weight $\ge \epsilon$.
    The cardinality of $\Atm_\epsilon(\eta)$ is bounded by $\lceil1/\epsilon\rceil$
    and $\Atm(\eta)$ is at most countable.
    Let $\alpha,\beta\in\Prob(\partial X)$ be two probability measures.
    A standard application of Fubini's theorem shows that 
    \[
        \alpha\times\beta(\Delta)=\sum_{t\in \partial X} \alpha(\{t\})\cdot\beta(\{t\})
    \]
    where the non-zero terms correspond to $t\in \Atm(\alpha)\cap \Atm(\beta)$.
    
    For $(x,y)\in B_-\times B_+$, observe that  the sets $\Atm(\psi_-(x))$, $\Atm(\psi_+(y))$ 
    and their intersection vary measurably and $\Gamma$-equivariantly.
    
    Assume, towards contradiction, that $\psi_-(x)\times \psi_+(y)(\Delta)>0$. 
    This is equivalent to asserting that for some $\epsilon>0$, the set
    \[
        \setdef{(x,y)\in B_-\times B_+}{\Atm_\epsilon(\psi_-(x))\cap\Atm_\epsilon(\psi_+(y))\ne \varnothing}
    \]
    has positive measure. Since the $\Gamma$-action on $B_-\times B_+$ is ergodic,
    we deduce that this set is conull,
    and the cardinality of the set $A(x,y)=\Atm_\epsilon(\psi_-(x))\cap\Atm_\epsilon(\psi_+(y))$
    is essentially constant. 
    
    By Fubini's theorem for a.e. $x\in B_-$ we have $A(x,y)=A(x,y')$ for a.e. $y,y'\in B_+$.
    Thus $A(x,-)$ is essentially independent of the second variable.
    For the same reason it is essentially independent of the first variable; hence $A(x,y)=A$
    for a fixed finite subset $A\subset \partial X$.
    Since the map $(x,y)\mapsto A(x,y)=A$ is equivariant, it follows that
    $\Gamma$ has a finite orbit $O$ on $\partial X$. By hypothesis $\Gamma$ does not fix a point or a pair of points in $\partial X$, so that $|O|\geq 3$. By considering the restriction  of the pseudo-metric on $\partial X^{(3)}$ from Lemma~\ref{lem:triples_to_sets} to the finite set of triples of distinct points in $O$, we obtain a $\Gamma$-invariant orbit in $\partial X^{(3)}$, contradicting that the action on $\partial X^{(3)}$ has unbounded orbits.
    
    Hence  $\psi_-(x)\times \psi_+(y)(\Delta)=0$ a.e. on $B_-\times B_+$ as claimed.
\end{proof}

\begin{proof}[End of proof of Theorem~\ref{thm:boundarymap}]
    
Since the actions $\Gamma\acts B_-$, $\Gamma\acts B_+$ are amenable,
the spaces $\Map_\Gamma(B_\pm,\Prob(\bar X))$ are not empty.
Consider the $\Gamma$-invariant Borel decomposition $\bar{X}=\bar{X}^b\cup\bar{X}^u$. 
In view of Claim~\ref{Claim:no-bdd} and Claim~\ref{claim:decomp}
we have
\[
    \Map_\Gamma(B_\pm,\Prob(\bar{X}))=\Map_\Gamma(B_\pm,\Prob(\bar{X}^u))
\]
and post-composing with the $\Gamma$-map $\pi_*:\Prob(\bar X^u)\to \Prob(\partial X)$ (see \S\ref{sec:Gromov-bd}), we obtain
\[
    \Map_\Gamma(B_\pm,\Prob(\partial X))\ne \varnothing.
\]
Let us choose some measurable $\Gamma$-equivariant maps
\[
    \psi_{-}:B_{-}\longrightarrow  \Prob(\partial X),\qquad \psi_{+}:B_{+}\longrightarrow  \Prob(\partial X).
\]

    Consider the partition of $(\partial X)^3$ into $\Gamma$-invariant Borel sets
    \[
        (\partial X)^3=\partial X^{(3)}\cup \Delta_{12}\cup \Delta'_{13}\cup \Delta'_{23}
    \]
    where $\Delta_{ij}=\setdef{(\xi_1,\xi_2,\xi_3)}{\xi_i=\xi_j}$,
    $\Delta_{123}=\setdef{(\xi,\xi,\xi)}{\xi\in\partial X}$, 
    $\Delta'_{ij}=\Delta_{ij}\setminus\Delta_{123}$.
     
    Note that $\Map_\Gamma(B_-\times B_+,\Prob(\partial X^{(3)}))=\varnothing$ by Claim \ref{Claim:no-bdd}.
    Therefore, using Claim~\ref{claim:decomp}, every measurable $\Gamma$-map 
    $\Psi:B_{-}\times B_{+}\longrightarrow  \Prob((\partial X)^3)$ 
    gives full measure to $\Delta_{12}\cup \Delta'_{13}\cup \Delta'_{23}$. 
    We shall apply this to the map
    \[
		\Psi(x,y)=\psi_-(x)\times \psi_+(y)\times \half(\psi_-(x)+\psi_+(y)).
    \] 
    Note that Claim~\ref{claim:2-diagonal} implies that a.e. $\Psi(x,y)(\Delta_{12})=0$.
    
   Let $\Delta$ denote the diagonal in $\partial  X \times \partial X$. The projection $\Delta'_{13}\to \Delta$, $(\xi,\eta,\xi)\mapsto (\xi,\xi)$ shows that
    \[
        \Psi(x,y)(\Delta'_{13})\le\Psi(x,y)(\Delta_{13})=\half\psi_-(x)\times\psi_+(y)(\Delta)
                +\half\psi_-(x)\times\psi_-(x)(\Delta).
    \]
    Moreover, the first term of the right hand side vanishes by Claim~\ref{claim:2-diagonal}, so that $\Psi(x,y)(\Delta'_{13})\le \half\psi_-(x)\times\psi_-(x)(\Delta)$. 
    Similarly, using the projection $\Delta'_{23}\to \Delta$, $(\eta,\xi,\xi)\mapsto (\xi,\xi)$
    gives
    \[
        \Psi(x,y)(\Delta'_{23})\le\Psi(x,y)(\Delta_{23})=\half\psi_+(y)\times\psi_+(y)(\Delta).
    \]
    We conclude that 
    \begin{align*}   
    1 &= \Psi(x,y)(\Delta'_{13})+ \Psi(x,y)(\Delta'_{23}) \\
    &\leq \half\psi_-(x)\times\psi_-(x)(\Delta) + \half\psi_+(y)\times\psi_+(y)(\Delta) \\
    &\leq 1,
    \end{align*}
    so that 
    $\psi_-(x)\times\psi_-(y)(\Delta)=\psi_+(y)\times\psi_+(y)(\Delta)=1$.
    Recall that, by Fubini's theorem,  for any probability measure $\nu$ one has $\nu\times\nu(\Delta)=1$
    if and only if $\nu$ is a Dirac measure $\nu=\delta_z$ for some $z$.
    Hence there exist maps  $\phi_\pm:B_\pm\longrightarrow  \partial X$ so that
     \[
        \psi_-(x)=\delta_{\phi_-(x)},\qquad \psi_+(y)=\delta_{\phi_+(y)}.
    \]
    The maps $\phi_\pm:B_{\pm}\overto{}\partial X$ are measurable and $\Gamma$-equivariant 
    because the maps $\psi_\pm:B_\pm\longrightarrow  \Prob(\partial X)$ are.

    We also note that,  since $\Psi(\Delta_{12})=0$ a.e. we have 
    $\phi_-(x)\ne \phi_+(y)$  for a.e. $(x,y)\in B_-\times B_+$.
    Therefore the map $\phi_{\bowtie}=\phi_{-}\times\phi_+$ can be viewed
    as a measurable $\Gamma$-equivariant map $B_-\times B_+\overto{}\partial X^{(2)}$
    into distinct pairs.
    
Next, we show that $\phi_\pm$ are essentially unique. 
Indeed, we showed that every $\psi_\pm\in\Map_\Gamma(B_\pm,\Prob(\partial X))$
take values in Dirac measures. Assume we have two pairs of $\Gamma$-maps 
$\psi^i_\pm:B_\pm\to\Prob(\partial X)$ with $i=1,2$. 
Consider the $\Gamma$-maps $B_{\pm}\longrightarrow  \Prob(\partial X)$ given by 
$z\mapsto \half(\psi^1_\pm(z)+\psi^2_\pm(z))$.
Such a map has the form $z\mapsto \delta_{\phi_\pm(z)}$ only if
$\psi^1_\pm(z)=\psi^2_\pm(z)=\delta_{\phi_\pm(z)}$.

Next consider an arbitrary map $\theta\in \Map_\Gamma(B_-\times B_+,\partial X)$, consider 
the measurable $\Gamma$-equivariant map  be defined by 
\[
    \Theta:B_-\times B_+\overto{}(\partial X)^3,
    \qquad\Theta(x,y)=(\theta(x,y),\phi_-(x),\phi_+(y)).
\] 
Using the $\Gamma$-equivariant Borel decomposition 
$(\partial X)^3=\partial X^{(3)}\cup \Delta'_{12}\cup \Delta'_{13}\cup\Delta_{23}$ 
and ergodicity of the $\Gamma$-action on $B_-\times B_+$
we deduce that $\Theta(x,y)$ lies in one of these sets for a.e. $(x,y)$. 
Lemma \ref{lem:triples_to_sets} combined with coarse metric ergodicity of $B_-\times B_+$ rules out the set of distinct triples $\partial X^{(3)}$.
Since the $23$-components of $\Theta$ is just $\phi_{\bowtie}$,
and the latter take values in the space of distinct pairs $\partial X^{(2)}$,
rule out $\Delta_{23}$. 
It follows that, up to a null set, either $\Theta(x,y)\in \Delta'_{12}$ or $\Theta(x,y)\in \Delta'_{13}$.
This corresponds to $\theta=\psi_-\circ \pr_-$ or $\theta=\psi_+\circ\pr_+$ as claimed in (ii).

Finally, notice that (iii) can be deduced from (ii) by looking at the coordinates in $(\partial X)^{(2)}$.
\end{proof}


%

\section{Classification of actions on hyperbolic spaces}
\label{sec:proof_main}

In this section we prove Theorem \ref{T:main_intro}.

\subsection{Lattices acting on hyperbolic spaces}

Recall that a subset $A$ of a metric space is \textit{coarsely dense} if there exists a constant $R$ such that $X$ is the $R$-neighborhood of $A$.  Recall moreover that given a group $\Gamma$ acting on a hyperbolic space $X$, its limit set is the set of boundary points that are equivalence classes of sequence of points $\gamma x$, for some fixed $x\in X$.

\begin{definition}
\label{defn:coarsely_min}
 We say that an action $\Gamma\acts X$ on a hyperbolic space is \emph{coarsely minimal} if $X$ is unbounded, the limit set of $\Gamma$ in $\partial X$ is not a single point, and every quasi-convex $\Gamma$-invariant subset of $X$ is coarsely dense.
\end{definition}

Note that coarse minimality is a stronger requirement than asking that the orbits of $\Gamma$ have full limit set. Indeed, consider for example, the free group $F_2$ acting  on its Cayley tree $T$,  which is the universal cover of a graph with $1$ vertex and two loops. Glueing, for each $n$, an interval of length $n$ to that vertex, one obtains a locally infinite graph whose universal cover is a tree $\tilde T$ on which $F_2$ acts with full limit set. The tree $T$ embeds as a convex $F_2$-invariant subtree of  $\tilde T$, but $T$ is not coarsely dense in $\tilde T$, so that the $F_2$-action on $\tilde T$ is not coarsely minimal. 

Notice that if $H$ is an infinite normal subgroup of infinite index of the hyperbolic $G$, then the action of $H$ on a Cayley graph of $G$ is coarsely minimal, but not cobounded.

Given a metric space $X$ and $C\geq0$, denote by $\bdd_C(X)$ the set of all closed subsets of diameter at most $C$, endowed with the Hausdorff metric. Notice that $\bdd_C(X)$ is quasi-isometric to $X$.

\begin{definition}
\label{defn:equivalence}
 Two actions $\Gamma\acts X_1,X_2$ on metric spaces $X_1,X_2$ are \emph{equivalent} if there exists an equivariant quasi-isometry $X_1\to \bdd_{C}(X_2)$ for some $C\geq 0$.
\end{definition}

The reason for having $\bdd_{C}(X_2)$ instead of $X_2$ is that we want to allow the situation where some group element has a fixed point in $X_1$ but merely a bounded orbit in $X_2$; for example, we want to declare all actions on bounded metric spaces to be equivalent.

We note that the definition above is very similar to \cite[Definition 3.5]{ABO}, and in fact the two definitions are easily seen to be equivalent for cobounded actions.

\begin{remark}\label{rem:minimal}
Consider an action $\Gamma\acts X$ on a geodesic hyperbolic space.
 \begin{enumerate}
  \item If the action is cobounded, then it is coarsely minimal.

  The following two items follow from a construction well-known to experts, namely taking the  coarse convex hull of an orbit and approximating it with a graph; this is explained for example in \cite[Remark 4]{GST}.
  
  \item If the limit set of $\Gamma$ is not a single point, then there is a coarsely minimal action $\Gamma\acts Y$ on a geodesic hyperbolic space $Y$ and an equivariant quasi-isometric embedding $Y\to X$.
  
  \item If $\Gamma$ is countable and $\Gamma\acts X$ is coarsely minimal, then $\Gamma\acts X$ is equivalent to an action on a separable geodesic hyperbolic space (in fact, a graph).
 \end{enumerate}
\end{remark}

Consider a semisimple Lie group $G=G_1\times\cdots\times G_N$ without compact factors. Assume that either $N\geq 2$ or $N=1$ and $G=G_1$ has rank at least 2.

Re-order the factors in such a way that $G_i$ is rank-one if and only if $1\le i\le n$, for some $n\leq N$.

We now re-state our main theorem.

\begin{theorem}\label{T:main}
	Let $\Gamma$ be an irreducible lattice in $G=G_1\times\cdots\times G_N$ as above.	
	Then every coarsely minimal action of $\Gamma$ on a geodesic hyperbolic space is equivalent to the action
	\[
		\Gamma\longrightarrow  G\overto{\pr_i} G_i\overto{}\Isom(X_i,d_i)
	\]
	for some $i \in \{1, \dots, n\}$, where  $X_i$ is the symmetric space for $G_i$.
\end{theorem}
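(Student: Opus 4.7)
The plan is to apply Theorem \ref{thm:boundarymap} to produce $\Gamma$-equivariant boundary maps into $\partial X$, use the product structure of the $\Gamma$-boundary together with relative metric ergodicity to reduce them to maps depending on a single $G_j$-boundary, rule out the case $j>n$ by a Weyl group argument adapted from \cite{BF-Weyl}, and for $j\le n$ build an equivalence between $X$ and the model space $X_j$ using the Radon--Nikodym bounds of Lemma \ref{lem:RN}.

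By Remark \ref{rem:minimal}(3) I may assume $X$ is separable. For each index $i$, choose a $G_i$-boundary pair $(B_i^-,B_i^+)$: for $i\le n$, take $B_i^-=B_i^+=B_i$ the $G_i$-boundary supplied by Lemma \ref{lem:RN}; for $i>n$, take $B_i^\pm=G_i/P_i^\pm$ for opposite minimal parabolics as in Proposition \ref{P:boundaries}(b). Then $B^\pm=\prod_i B_i^\pm$ form a $\Gamma$-boundary pair by Proposition \ref{P:boundaries}(d),(e). Coarse minimality, combined with the limit set not being a single point, rules out $\Gamma$-invariant bounded subsets of $X$ and $\Gamma$-fixed points or pairs in $\partial X$, so Theorem \ref{thm:boundarymap} supplies essentially unique $\Gamma$-maps $\phi_\pm:B^\pm\to\partial X$ with $\phi_\bowtie$ essentially valued in $\partial X^{(2)}$. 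Combining relative metric ergodicity of each projection $B^\pm\to B_j^\pm$ with the uniqueness clauses (i),(ii) of Theorem \ref{thm:boundarymap}, a standard factoring argument then shows that $\phi_\pm=\psi_\pm\circ\pr_{B_j^\pm}$ for a single index $j$; density of the projection of $\Gamma$ in the complementary sub-product makes $\psi_\pm$ essentially $G_j$-equivariant.

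The main obstacle is to rule out $j>n$, i.e.\ when $G_j$ is a higher-rank simple algebraic group over $k_j$. In that case, the Weyl group $W$ of $G_j$ has order at least $4$ and acts by partial isomorphisms on open dense subsets of $G_j/P_j^-\times G_j/P_j^+$. Each nontrivial $w\in W$ transports the essentially unique $\Gamma$-equivariant map $\phi_\bowtie:B^-\times B^+\to\partial X^{(2)}$ into another such map, but by Theorem \ref{thm:boundarymap}(iii) the only candidates are $\phi_\bowtie$ and $\tau\circ\phi_\bowtie$; thus $W$ would be forced to act on the image through the two-element group $\{\id,\tau\}$. Adapting the Weyl group method of \cite{BF-Weyl}, I would argue that this collapses pairs of distinct boundary points under $\phi_\bowtie$, contradicting the fact that its image lies essentially in $\partial X^{(2)}$.

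It remains to identify $X$ with $X_j$ when $j\le n$. Let $\psi:B_j\to\partial X$ be the $G_j$-equivariant boundary map produced above, and let $\nu$ denote the boundary measure class. For $x\in X_j$, its stabilizer $K_x\le G_j$ is compact, so by Lemma \ref{lem:RN} the $K_x$-translates of $\nu$ are uniformly comparable to $\nu$. Pushing forward via $\psi$ and then via $\Psi$ from Lemma \ref{lem:boundary_to_bounded}, and invoking the comparable-measures estimate of Lemma \ref{L:centers}, the barycenters $\beta_\epsilon(k_*\Psi(\psi_*\nu))$, $k\in K_x$, stay within bounded Hausdorff distance of one another in $\bdd(X)$. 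Hence $K_x$ has uniformly bounded orbits in $X$, and selecting such an orbit-center defines a map $F:X_j\to\bdd_C(X)$ equivariant under $\Gamma$ acting via $\pr_j$. The same Radon--Nikodym bound applied to the precompact family of $G_j$-elements moving points within $X_j$-distance $R$ shows $F$ is coarsely Lipschitz. Properness of $F$ follows because divergent sequences in $X_j$ accumulate to distinct points of $\partial X_j$, which $\psi$ sends to essentially distinct points of $\partial X$, preventing their $F$-images from remaining bounded; finally, coarse density of $F(X_j)$ is forced by coarse minimality of $\Gamma\acts X$. This establishes the equivalence between $\Gamma\acts X$ and $\Gamma\acts X_j$.
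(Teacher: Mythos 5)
Your proposal follows the paper's architecture (boundary maps via Theorem~\ref{thm:boundarymap}, factoring through one $B_i$, a Weyl-group exclusion of the higher-rank factors, and a metric transfer via Lemma~\ref{lem:RN}), but three steps contain genuine gaps. The most basic one: you assert that coarse minimality together with the limit set not being a single point rules out a $\Gamma$-fixed point or pair in $\partial X$. That is false as a general implication --- the cobounded action of $\bbZ$ on $\bbR$ is coarsely minimal and fixes the pair of ends --- so an input about $\Gamma$ itself, beyond coarse minimality, is unavoidable. The paper's argument is: if $\Gamma$ fixed a pair, the index-$\leq 2$ subgroup fixing a boundary point would carry the Busemann quasimorphism of \cite[Proposition 3.7]{CCMT}; by Burger--Monod \cite{BurgerMonod1,BurgerMonod2} irreducible higher-rank lattices admit no unbounded quasimorphisms, so by \cite[Lemma 3.8]{CCMT} the action would have a single limit point, contradicting coarse minimality. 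Without this bounded-cohomology ingredient the hypotheses of Theorem~\ref{thm:boundarymap} are simply not verified.

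The Weyl-group step is a promissory note rather than a proof. From the fact that $U=\{w\in W \mid \psi\circ w=\psi \text{ a.e.}\}$ has index at most $2$ (which does follow from Theorem~\ref{thm:boundarymap}(ii)--(iii)) no contradiction is immediate: Coxeter groups have plenty of index-$2$ subgroups, so ``$W$ acts through $\{\id,\tau\}$'' collapses nothing by itself. The paper's essential point is that Mackey's point-realization theorem identifies the $U$-invariant subalgebra pulled back from $G_i/P$ with the functions on a \emph{standard parabolic} quotient $G_i/P_T$, so $U\le W_T$ with $T\subsetneq S$; this forces a proper standard parabolic $W_T$ to be normal of index $2$, which is then ruled out by the reflection representation ($W_T$ fixes a nonzero vector, and normality plus irreducibility would place it in the kernel of a faithful representation). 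Your sketched ``collapsing pairs'' contradiction does not substitute for this. Finally, your properness argument for $F:X_j\to \bdd_C(X)$ evaluates $\psi$ at the limit points of divergent sequences in $X_j$; but $\psi$ is only a measure-class-defined measurable map on $(B_j,\nu_j)$ and has no well-defined value at any individual boundary point, so ``$\psi$ sends them to essentially distinct points of $\partial X$'' cannot be used this way. The paper proves the corresponding inequality $d_i\preceq d$ (Claim~\ref{C:d_i-prec-d}) by an entirely different and pointwise-safe mechanism: a loxodromic element $\gamma_0$ (Gromov's classification), transitivity of the compact group $K_i$ on $\partial X_i$ making $\{\kappa\gamma_0^j\}$ coarsely dense in $X_i$, and density of $\pr_i(\Gamma)$. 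Your treatment of the other inequality (precompact in $G_j$ implies $d$-bounded, via Lemmas~\ref{lem:RN}, \ref{lem:boundary_to_bounded} and \ref{L:centers}) does match the paper's Claim~\ref{C:d-prec-di}, though note that before invoking Lemma~\ref{lem:boundary_to_bounded} one must check that $\phi_*\nu$ is atom-free, which the paper deduces from metric ergodicity of $B_i$ and you omitted.
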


Let us extend the notation of the theorem by denoting $X_j$, for $j>n$, the symmetric space for $G_j$.

 Let $\Gamma\acts (X,d)$ be a coarsely minimal action of $\Gamma$ on the geodesic hyperbolic space $X$. Denote by $d_i$ the pseudo-metric on $\Gamma$ corresponding to $\Gamma\acts X_i$ (with respect to some basepoint $x_i$, for $i\leq n$), and $d$ the pseudo-metric corresponding to $\Gamma\acts (X,d)$.

\subsection{Ruling out elementary actions}

In order to be able to apply Theorem~\ref{thm:boundarymap}, we have to rule out that $\Gamma$ fixes a pair of points in $\partial X$ (the case that it fixes one point being ruled out by hypothesis). If that were the case, the subgroup $\Gamma'$ of index at most 2 of $\Gamma$ that fixes a boundary point would admit the quasimorphism described in \cite[Proposition 3.7]{CCMT}. According to \cite{BurgerMonod1,BurgerMonod2}, $\Gamma'$ does not admit unbounded quasimorphisms, so that according to \cite[Lemma 3.8]{CCMT} the action $\Gamma\acts X$ has a single limit point, contradicting the coarse minimality of the action.
From now on, we will assume that $\Gamma$ does not fix a point or a pair of points in $X$.

Finally, we can assume that $X$ is separable by Remark~\ref{rem:minimal}.

\subsection{Boundary map from one factor}


 For every $1\leq i \leq N$, we let $B_i=G_i/P_i$ and we let $\nu_i$ be a measure in the Haar class on $B_i$, and similarly we denote $B=G/P=B_1\times \dots\times B_N$. Note that we are in the setting of Proposition \ref{prop:G_P_coarse_ergodic} about coarse metric ergodicity.

Theorem \ref{thm:boundarymap} affords now two $\Gamma$-maps $B\to \partial X$ satisfying various properties. The first of these properties implies that these two maps must coincide a.e. We assume henceforth that both maps are identical, and we denote it by $\phi \colon B\to \partial X$. 

\begin{claim}\label{claim:factors}
 The map $\phi$ factors through one of $B_i$. In other words,  there is an index $i\in \{1,\dots,N\}$ and a $\Gamma$-map $B_i\overto{\phi_i} \partial X$ such that 
	\[
		\phi\colon B\overto{\pr_i} B_i\overto{\phi_i} \partial X.
	\]
\end{claim}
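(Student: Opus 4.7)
The plan is to apply Theorem~\ref{thm:boundarymap}(ii) not to the boundary pair $(B,B)$ but to the splittings $B = \hat B_i \times B_i$ with $\hat B_i := \prod_{j\ne i} B_j$, letting $i$ vary over $\{1,\ldots,N\}$.

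First, I would verify that $(\hat B_i, B_i)$ is a $\Gamma$-boundary pair for each $i$. Invoking Proposition~\ref{P:boundaries}(d) with the $G_j$-boundary pairs $(B_j, \{*\})$ for $j\ne i$ and $(\{*\}, B_i)$ for $j = i$ (where $\{*\}$ denotes the trivial $G_j$-space) exhibits $(\hat B_i, B_i)$ as a $G$-boundary pair, which promotes to a $\Gamma$-boundary pair by Proposition~\ref{P:boundaries}(e). The only subtlety is checking that $(\{*\}, B_j)$ and $(B_j, \{*\})$ are $G_j$-boundary pairs; this reduces to metric ergodicity of $G_j \curvearrowright B_j$, which follows from $B_j$ being a $G_j$-boundary by composing the relative metric ergodicity of $B_j \times B_j \to B_j$ with the terminal projection to a point.

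Next, applying Theorem~\ref{thm:boundarymap}(ii) to the boundary pair $(\hat B_i, B_i)$ would yield $\Gamma$-maps $\phi^{(i)}_{\hat B_i}\colon \hat B_i \to \partial X$ and $\phi^{(i)}_{B_i}\colon B_i \to \partial X$ such that
\[
\Map_\Gamma(\hat B_i \times B_i, \partial X) = \{\phi^{(i)}_{\hat B_i}\circ \pr_{\hat B_i},\ \phi^{(i)}_{B_i}\circ \pr_{B_i}\}.
\]
Since our $\phi\colon B\to\partial X$ lies in this set, I obtain the dichotomy: either $\phi$ factors through $B_i$ (in which case setting $\phi_i := \phi^{(i)}_{B_i}$ finishes the proof), or $\phi$ factors through $\hat B_i$, i.e., is essentially independent of the $i$-th coordinate of $B$.

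Finally, if the second alternative held for every $i \in \{1,\ldots,N\}$, then $\phi$ would be essentially independent of each coordinate of $B_1 \times \cdots \times B_N$, and a routine Fubini argument on standard Borel spaces would force $\phi$ to be essentially constant. Its constant value would then be a $\Gamma$-fixed point in $\partial X$, contradicting the hypothesis (established in the preceding subsection) that $\Gamma$ fixes no point in $\partial X$. Hence the first alternative must occur for at least one $i$, proving the claim. The main obstacle is organisational: pinning down that each $(\hat B_i, B_i)$ really is a $\Gamma$-boundary pair in the sense of Definition~\ref{Def:boundary-pairs}; once that is accepted, the dichotomy from Theorem~\ref{thm:boundarymap}(ii) and the Fubini collapse are routine.
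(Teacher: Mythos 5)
There is a genuine gap at the first step: $(\hat B_i, B_i)$ is \emph{not} a $\Gamma$-boundary pair, and your proposed verification via Proposition~\ref{P:boundaries}(d) breaks down before the metric ergodicity issue you flag. Definition~\ref{Def:boundary-pairs} requires the actions on \emph{both} members of the pair to be amenable, and an action of $G_j$ on a one-point space is amenable if and only if $G_j$ itself is amenable --- which the factors here (standard rank one groups, higher rank simple groups) are not. So $(B_j,\{*\})$ and $(\{*\},B_i)$ are not $G_j$-boundary pairs, and the reduction is not merely ``organisational.'' Nor can this be patched: the conclusion itself is false. The amenability of $\Gamma\acts B$ comes precisely from taking the \emph{full} product $B_1\times\cdots\times B_N$, whose $G$-point stabilizers are products of amenable groups $P_1\times\cdots\times P_N$; a single factor $B_i$ has $G$-stabilizer $G_1\times\cdots\times P_i\times\cdots\times G_N$, which contains non-amenable factors. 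Concretely, for $\Gamma=\SL_2(\bbZ[\frac1p])<\SL_2(\bbR)\times\SL_2(\bbQ_p)$ the action $\Gamma\acts B_2=\bbP^1(\bbQ_p)$ is non-amenable: by Zimmer's correspondence, its amenability is equivalent to amenability of the probability-measure-preserving action of the non-amenable group $\SL_2(\bbR)\times P_2$ on $G/\Gamma$, which would force that group to be amenable. The same applies to $\Gamma\acts \hat B_i$. Hence Theorem~\ref{thm:boundarymap} simply cannot be invoked for the pair $(\hat B_i,B_i)$, and your dichotomy ``$\phi$ factors through $B_i$ or through $\hat B_i$'' is left without justification (it would also require relative metric ergodicity of the $\Gamma$-maps $B\to B_i$ and $B\to\hat B_i$, which is nowhere established).

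The paper obtains your dichotomy without ever re-splitting $B$ into a new pair: it works throughout with the legitimate boundary pair $(B,B)$. Having chosen $i$ so that $\phi$ genuinely depends on the $B_i$-coordinate (your final Fubini step is essentially the paper's opening line, and that part of your proposal is fine), it writes $B\simeq B_i\times B'$ and tests the $\Gamma$-map $\Phi\colon B\times B\to \partial X^2$, $\Phi(x,y,x',y')=(\phi(x,y),\phi(x,y'))$, which duplicates the $B_i$-variable while varying the complementary variable across the two copies. By ergodicity of $B\times B$, either $\Phi$ lands in the diagonal --- giving that $\phi$ is independent of $y\in B'$, i.e.\ factors through $\pr_i$ --- or $\Phi$ lands in $\partial X^{(2)}$, where Theorem~\ref{thm:boundarymap}(iii) forces $\Phi\in\{\phi_{\bowtie},\tau\circ\phi_{\bowtie}\}$, making $\phi$ independent of $x\in B_i$ and contradicting the choice of $i$. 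If you want to keep your outline, this variable-doubling device inside $(B,B)$ is the correct replacement for the illegitimate appeal to the pair $(\hat B_i,B_i)$.
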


\begin{proof}
The equivariant map $\phi:B\overto{}\partial X$ cannot be essentially constant, because $\Gamma$ does not have a fixed point in $\partial X$.
Fix an $i\in\{1,\dots,N\}$ so that $\phi$ is not essentially constant over the $B_i$-factor of $B=B_1\times \dots \times B_N$.
We let $B'$ be the product of the other factors, and identify $B\simeq B_i\times B'$.
Using this identification we consider the map
\[
	\Phi:B_i\times B' \times B_i\times B' \to \partial X^2,\quad
	(x,y,x',y') \mapsto (\phi(x,y),\phi(x,y')).
\]
By Theorem~\ref{thm:boundarymap}(iii) we have three cases: $\Phi(B\times B)$ is contained in the diagonal $\Delta\subset \partial X^2$,
$\Phi=\phi_{\bowtie}$, or $\Phi=\tau\circ\phi_{\bowtie}$, where $\phi_{\bowtie}=\phi\times \phi$ and $\tau(\xi,\eta)=(\eta,\xi)$.
In the first case we see that $\phi(x,y)$ is independent of $y\in B'$, and therefore descends to a $\Gamma$-map $B_i\to \partial X$ as required. 
In the second and third cases, $\phi$ is independent of $x\in B_i$, contradicting our choice of $i$.
\end{proof}

From now on we fix the index $i \in \{1, \dots, N\}$ afforded by Claim~\ref{claim:factors}.

\subsection{The factor $B_i$ is associated with a rank-one factor} 
	\label{sub:step_2.5_rank_1_factor}\hfill{}\\

We now explain that the factor $B_i$ afforded by Claim~\ref{claim:factors}
is associated with a rank-one factor, that is $i\leq n$.
We argue by contradiction, assuming $i>n$, that is, $G_i$ is of higher rank.

As before, we let $P_i<G_i$ be a minimal parabolic.
In the sequel we will omit the index $i$ and denote $P=P_i$.
Let $A<P$ be a maximal split torus.
We let $W=N_{G_i}(A)/Z_{G_i}(A)$ be the corresponding Weyl group and let $S\subset W$ be the standard Coxeter generators associated with the positivity defined by $P$.
Letting $Z=Z_{G_i}(A)$ be the centralizer of $A$, we note that $W$ acts naturally on $G_i/Z$ by $G_i$-automorphisms.

As usual we identify the set $S$ with the set of simple roots of $G_i$ associated with the pair $(A,P)$. 
Any subset $T$ of $S$ generates a subgroup $W_T<W$
and it corresponds to a standard parabolic $P_T<G_i$ containing $P=P_\varnothing$.
All the subgroups of $G_i$ containing $P$ are of this form.
Denoting by $\pi_T:G_i/Z\to G_i/P_T$ the standard map coming from the inclusion $Z<P<P_T$, we note that 
\[ W_T=\{w\in W\mid \pi_T\circ w=\pi_T \}. \]

We let $\pi=\pi_\varnothing:G_i/Z\to G_i/P$ be the standard map and let $w_0\in W$ be the longest element (with respect to the word distance induced by $S$).
It is a standard fact that map $\pi\times \pi\circ w_0:G_i/Z\to G_i/P\times G_i/P$
is injective and its image is Zariski open
(this image is the big cell in the Bruhat decomposition of $G_i/P\times G_i/P$). 


We also endow $G_i/Z$ with the Haar measure class. By the preceding paragraph, we may identify it, as measured $G_i$-spaces, with $B_i\times B_i=G_i/P\times G_i/P$ via the map
$\pi\times \pi\circ w_0$.

We set $\phi_i:B_i\to \partial X$ to be the map given in Claim~\ref{claim:factors}.
Note that $\phi_i$ is not essentially constant, as $\partial X$ has no $\Gamma$-fixed points.
We thus may find a bounded measurable function $f_0:\partial X \to \mathbb{C}$ such that $f_0\circ \phi_i$ is not essentially constant.
We fix such a function $f_0$.

We consider the map $\psi=\phi_i\circ p_1:B_i\times B_i\longrightarrow  \partial X$,
where $p_1:B_i\times B_i\longrightarrow  B_i$ is the projection on the first factor. Under our identifications $B_i = G_i/P$ and $B_i\times B_i = G_i/Z$, the projection $p_1$ is identified with the map $
\pi = \pi_\varnothing \colon  G_i/Z \to G_i/P$. We set
\[ U=\{w\in W\mid \psi\circ w \mbox{ agrees a.e. with }\psi\}<W. \]
By Theorem~\ref{thm:boundarymap}(ii) we conclude that $U<W$ is of index at most 2.

We shall now show that $U$ is contained in a proper parabolic subgroup $W_T$ of $W$. 
Consider now the algebra $L^\infty (G_i/Z)$ and its subalgebra $\pi^*(L^\infty (G_i/P))$ consisting of functions
pulled back from $L^\infty (G_i/P)$ under $\pi:G_i/Z\to G_i/P$ (which we identify with $p_1:B_i\times B_i\to B_i$).
Consider the subalgebra
\[ \{f\in L^\infty (G_i/Z) \mid f\in L^\infty (G_i/P) \mbox{ and for every }
u\in U,~f\circ u \mbox{ agrees a.e. with } f\}. \]
This is a weak*-closed $G_i$-invariant subalgebra of $L^\infty (G_i/Z)$.
By Mackey's point realization theorem this algebra coincides with the subalgebra of functions pulled back from a $G_i$-factor of $G_i/Z$.
As all functions in it are pulled back from $G_i/P$, this factor is of the form $G_i/P_T$ for some $T\subset S$. 
As the algebra includes the non-constant function $f_0\circ \psi$,
we conclude that $P_T\neq G_i$, thus $T\neq S$ and $W_T\neq W$.
We have that $\pi_T\circ u=\pi_T$ for every $u\in U$, thus $U<W_T$.
It follows that $W_T$ is of index~$2$ in $W$ and in particular it is a normal subgroup.

To obtain a contradiction and conclude this step of the proof, we use that $W$ is an irreducible finite Coxeter group with $|W|>2$, since $G_i$ is simple of  rank~$\geq 2$. Therefore, there is $t \in T$ and $s \in S \setminus T$ such that $s$ and $t$ do not commute. In particular $sts$ is a reduced word in $W$. Since $W_T$ has index~$2$ we have $sts \in W_T$. By \cite[Cor.~1 in Chapter~IV, \S 1, no.~1.8]{Bourbaki}, we deduce that $s \in T$, a contradiction.

\subsection{Bounded in $G_i$ is $d$-bounded} 
	\label{sub:step_2_bounded_to_bounded}\hfill{}\\

	Next, we show that $d$ is ``smaller'' than $d_i$, for $i$ as in Claim \ref{claim:factors}.
	
	\begin{claim}\label{C:d-prec-di}
		There exist $L,C$ so that for all $\gamma,\gamma'$ we have
		\[
			d(\gamma,\gamma')\le L\cdot d_i(\gamma,\gamma')+C.
		\]
	\end{claim}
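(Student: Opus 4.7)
By $\Gamma$-invariance of both pseudo-metrics, it suffices to bound $d(1,\gamma)$ linearly in $d_i(1,\gamma)$. My plan is to first establish a ``bounded-to-bounded'' statement---for every precompact $B \subset G_i$ there exists $C(B)$ with $\pr_i(\gamma)\in B \Rightarrow d(1,\gamma)\le C(B)$---and then upgrade it to a linear bound via the coarse density of $\pr_i(\Gamma)\cdot x_i$ in $X_i$.

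For the bounded-to-bounded step I would form $\mu := \phi_{i*}\nu_i \in \Prob(\partial X)$. This measure is atomless: an atom $\xi$ with $\nu_i(\phi_i^{-1}(\xi))=a>0$ would yield a $\nu_i\otimes\nu_i$-positive subset of $B_i\times B_i$ on which $\phi_{\bowtie}$ takes values on the diagonal of $\partial X^2$, contradicting Theorem~\ref{thm:boundarymap}. Hence Lemma~\ref{lem:boundary_to_bounded} produces $\mu' := \Psi(\mu) \in \Prob(\bdd(X))$, and $K_\epsilon := \beta_\epsilon(\mu') \in \bdd(X)$ is a bounded subset of $X$. For $\gamma$ with $\pr_i(\gamma)$ in a fixed symmetric precompact $B \subset G_i$, Lemma~\ref{lem:RN} bounds the Radon--Nikodym derivative of $\pr_i(\gamma)\nu_i$ with respect to $\nu_i$ by some $M_B$; by $\Gamma$-equivariance of $\phi_i$ this transfers to $\gamma\mu$ versus $\mu$, and since $\Psi$ factors through $\mu \mapsto \mu^{\otimes 3}$ the bound cubes to give $M_B^3$ between $\gamma\mu'$ and $\mu'$. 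Lemma~\ref{L:centers} then yields $\beta_{\epsilon/M_B^3}(\gamma\mu') \subseteq N_{R}(K_\epsilon)$ for some $R$ depending only on $\epsilon$, $\mu'$, and $M_B$. By $\Isom(X)$-equivariance of the barycenter, $\beta_{\epsilon/M_B^3}(\gamma\mu') = \gamma\cdot \beta_{\epsilon/M_B^3}(\mu')$, so any basepoint $x \in \beta_{\epsilon/M_B^3}(\mu')$ satisfies $d(x,\gamma x) \le C(B)$.

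To upgrade to a linear bound, I use that in the rank-one regime $i\le n$ the theorem's hypothesis forces $N\ge 2$, so $\pr_i(\Gamma)$ is dense in $G_i$ by the irreducibility assumption. Combined with the proper cocompact action of $G_i$ on $X_i$, this makes $\pr_i(\Gamma)\cdot x_i$ coarsely dense in $X_i$: there is $R_0$ so that every point of $X_i$ lies within $R_0$ of $\pr_i(\Gamma)\cdot x_i$. Given $\gamma$ with $D := d_i(1,\gamma)$, I would sample a geodesic from $x_i$ to $\pr_i(\gamma)x_i$ at integer times and pick $\gamma_k \in \Gamma$ with $\pr_i(\gamma_k)x_i$ within $R_0$ of the $k$-th sample, taking $\gamma_0=1$ and $\pr_i(\gamma_{\lceil D\rceil})x_i$ within $R_0$ of $\pr_i(\gamma)x_i$. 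Then $d_i(1,\gamma_k^{-1}\gamma_{k+1})\le 1+2R_0$ and $d_i(\gamma_{\lceil D\rceil}^{-1}\gamma, 1) \le R_0$, so Step~1 bounds each consecutive $d$-step and the endpoint gap by a uniform constant $C_1$; summing along the chain yields $d(1,\gamma) \le L\cdot d_i(1,\gamma) + C$. The main obstacle is Step~1, with its careful tracking of Radon--Nikodym derivatives through the successive equivariant maps; once that is in place, Step~2 is a routine geodesic-chaining argument enabled by the density of $\pr_i(\Gamma)$ in $G_i$.
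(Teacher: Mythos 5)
Your proof is correct and follows essentially the same route as the paper's: push $\nu_i$ forward to an atomless $\mu\in\Prob_c(\partial X)$, apply Lemma~\ref{lem:boundary_to_bounded}, transfer the Radon--Nikodym bounds of Lemma~\ref{lem:RN} through $\Psi$, invoke Lemma~\ref{L:centers} together with $\Isom(X)$-equivariance of $\beta_\epsilon$ to get uniformly bounded displacement on precompact fibers, and then upgrade to a linear bound using density of $\pr_i(\Gamma)$ in $G_i$ (the paper compresses your geodesic-chaining step into the remark that $(\Gamma,d_i)$ is $(1,1)$-quasi-geodesic, so that bounded-to-bounded suffices). Your two local deviations are both valid and, if anything, slightly more careful than the paper's write-up: you deduce atomlessness of $\mu$ from the essential containment of the image of $\phi_{\bowtie}$ in $\partial X^{(2)}$ rather than from metric ergodicity of $B_i$, and you correctly track the cubed constant $M_B^3$ (since $\Psi$ factors through $\mu\mapsto\mu^{\otimes 3}$) and take $B$ symmetric so as to have the two-sided comparability that Lemma~\ref{L:centers} actually requires.
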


 \begin{proof}
	As $G$ is of higher rank, we deduce from \S\ref{sub:step_2.5_rank_1_factor} that $N>1$ and in particular, we get that $\pr_i(\Gamma)$ is dense in $G_i$. Hence, in the metric $d_i$ any pair of points is connected by a $(1,1)$-quasi-geodesic, and to prove the claim
	it suffices to show that sequences that are bounded in $G_i$ are bounded in $(\Gamma,d)$.
	In other words, it suffices to show that any sequence $\{\gamma_j\}$ in $\Gamma$ for which
	$\{\pr_i(\gamma_j)\}$ is precompact in $G_i$, one has
	\[
		\sup_j d(\gamma_j,1)<+\infty.
	\]

Let $\mu=\phi_*\nu_i\in\Prob(\partial X)$ be the pushforward of the measure $\nu_i$ on $B_i$.
By the metric ergodicity of $B_i$, $\mu$ has no atomic part.
Indeed, if it had we would get a countable invariant subset of $\partial X$ and upon endowing it with the discrete metric, in view of the metric ergodicity of $B_i$, we will conclude that this set contains a single point which is $\Gamma$-invariant, contradicting the hypothesis.

By Lemma \ref{lem:boundary_to_bounded}, we have a $\Gamma$-map
	\[
		\Psi:\Prob_c(\partial X)\overto{}\Prob(\partial X^{(3)}),
	\]
	where, as before, $\Prob_c(\partial X)$ is the set of
	all atom-less probability measures on $\partial X$.
	
		We assume that $\{\gamma_j\}$ is such that $\{\pr_i(\gamma_j)\}$ is precompact in $G_i$.
We now need the following:

\begin{lemma}
 \label{lem:RN}
  Given any precompact subset $\{\gamma_j\}\subseteq G_i$, the corresponding Radon-Nikodym derivatives are uniformly bounded, that is, we have
 	\[
 		\sup_{j\ge 1} \|\frac{d\gamma_j\nu_i}{d\nu_i}\|_\infty<+\infty.
 	\]
 \end{lemma}

 \begin{proof}
Since $\nu_i$ is the Patterson--Sullivan measure for $G_i$, we have for any $g\in G_i$ and a.e. $\xi\in \partial X_i$
\[
    \frac{d g\nu_i}{d\nu_i}(\xi)=e^{-\delta_i \beta(g,\xi)}
\]
where $\delta_i$ is a constant associated with the symmetric space $X_i$ for $G_i$, and $\beta(g,\xi)$
is the Busemann function. Since the Busemann function is bounded
by the distance in $X_i$, $|\beta(g,\xi)|\le d_i(go,o)$, the Lemma is proved. 
 \end{proof}

Hence, by Lemma \ref{lem:RN} we have:
	\[
		\sup_{j\ge 1}\ \|\frac{d\gamma_j\nu_i}{d\nu_i}\|_\infty=C<+\infty.
	\]
	We thus have the same bound on the Radon-Nikodym derivatives of $\mu$
	\[
		\sup_{j\ge 1}\ \|\frac{d\gamma_j\mu}{d\mu}\|_\infty\le C
	\]
	and also, by the moreover part of Lemma \ref{lem:boundary_to_bounded},
	\[
		\sup_{j\ge 1}\ \|\frac{d\Psi(\gamma_j\mu)}{d\Psi(\mu)}\|_\infty\le C^3.
	\]
	Setting $U=\Prob(\partial X^{(3)})$, recall the maps $\beta_\epsilon:\Prob(\partial X^{(3)})\to \bdd(\partial X^{(3)})$ from \S\ref{sec:bary}, which are $\Gamma$-equivariant (as they are equivariant with respect to all isometries). Fix $\epsilon\in (0,1/2)$ and set $R=r_{\epsilon/C}(\Psi(\mu))+1$.
 Fixing $j$ and applying Lemma~\ref{L:centers} with $m_1=\Psi(\mu)$
 and $m_2=\Psi(\gamma_j\mu)$ we get 
 \[
		\gamma_j (\beta_{\epsilon}\circ \Psi(\mu))=\beta_{\epsilon}\circ\Psi(\gamma_j\mu) \subset N_R\big(\beta_{\epsilon/C}(\Psi(\mu))\big).
	\]
 Since $N_R\big(\beta_{\epsilon/C}(\Psi(\mu))\big)$ is a bounded subset of $\partial X^{(3)}$, we get that the sequence $\{\gamma_j\}$ has bounded orbits in $\partial X^{(3)}$, whence it is bounded in $(\Gamma,d)$ by the ``moreover'' part of Lemma \ref{lem:triples_to_sets}.
\end{proof}

\subsection{Unbounded in $G_i$ is $d$-unbounded} 
\label{sub:unbounded_in_g_i_is_unbounded}

\begin{claim}\label{C:d_i-prec-d}
	There exist $L,C$ so that for all $\gamma,\gamma'$ we have
	\[
		d_i(\gamma,\gamma')\le L\cdot d(\gamma,\gamma')+C.
	\]
\end{claim}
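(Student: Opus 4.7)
To prove the claim $d_i\le L\cdot d+C$, I plan to follow the same template as Subsection~3.5 but in the opposite direction: I will show that any sequence $\{\gamma_j\}\subset \Gamma$ with $\sup_j d(\gamma_j,1)<\infty$ has $\{\pr_i(\gamma_j)\}$ precompact in $G_i$, which by properness of $G_i\acts X_i$ is equivalent to $\sup_j d_i(\gamma_j,1)<\infty$. To promote this ``bounded-to-bounded'' statement to the desired linear bound, I will use that coarse minimality of $\Gamma\acts X$ forces the orbit $\Gamma\cdot o$ to be coarsely dense in $X$ (since its quasi-convex hull is $\Gamma$-invariant and therefore coarsely dense in $X$, and $\Gamma\cdot o$ is coarsely dense in this hull), yielding $(1,1)$-quasi-geodesics in $(\Gamma,d)$, in direct analogy with the use of density of $\pr_i(\Gamma)$ in $G_i$ at the start of Claim~\ref{C:d-prec-di}.

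For the bounded-to-bounded implication I would mimic the barycenter argument of Subsection~3.5. Set $\mu=(\phi_i)_*\nu\in\Prob(\partial X)$, which is atomless, and $K=\beta_\epsilon\circ \Psi(\mu)\in\bdd(X)$. By $\Gamma$-equivariance, $\gamma_j\cdot K=\beta_\epsilon\circ \Psi(\gamma_j\cdot \mu)=\beta_\epsilon\circ \Psi((\phi_i)_*(\pr_i(\gamma_j)\cdot\nu))$. If $d(\gamma_j,1)\le R$, then since $\gamma_j$ acts by isometries on $X$ with $\gamma_j\cdot o$ confined to a ball of radius $R$ around $o$, the sets $\gamma_j\cdot K$ all remain in a fixed bounded subset of $X$. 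The goal is then to show that if $\pr_i(\gamma_j)$ is \emph{not} precompact in $G_i$, the sets $\gamma_j\cdot K$ must escape every bounded subset of $X$, contradicting the previous sentence.

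Assume by contradiction that $\{\pr_i(\gamma_j)\}$ is unbounded in $G_i$. By the north--south dynamics of the standard rank-one group $G_i$ acting on its boundary $B_i=\partial X_i$, one may pass to a subsequence with $\pr_i(\gamma_j)\cdot x_i\to \xi^+$ and $\pr_i(\gamma_j)^{-1}\cdot x_i\to \xi^-$ in $\partial X_i$, and consequently $\pr_i(\gamma_j)\cdot \nu\to \delta_{\xi^+}$ weakly in $\Prob(B_i)$ (using that the visual measure $\nu$ is atomless, so $\nu(\{\xi^-\})=0$). Granted that this concentration can be transported through $\phi_i$ so that $\gamma_j\cdot \mu$ concentrates at the point $\eta=\phi_i(\xi^+)\in\partial X$, the structure of $\Psi$ via the coarse center of an ideal triangle (Lemma~\ref{lem:triples_to_sets}) forces $\Psi(\gamma_j\cdot \mu)$ to be supported on bounded subsets of $X$ whose $\beta_\epsilon$-barycenter escapes to infinity along $\eta$ (since a degenerate ideal triangle with all three vertices near $\eta$ has coarse center tending towards $\eta$). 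This produces the desired contradiction.

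The principal technical obstacle is the transfer of concentration through the merely measurable map $\phi_i$, since $\xi^+$ is a specific point determined by the sequence rather than a $\nu$-generic one. I expect to handle this via Lusin's theorem: extract a closed $E\subset B_i$ of $\nu$-measure close to $1$ on which $\phi_i|_E$ is continuous, and arrange (by modifying $E$, using inner regularity of $\nu$ together with the fact that density points of $E$ form a $\nu$-full subset of $E$, and by passing to a further subsequence of $\{\gamma_j\}$) that $\xi^+\in E$ is a continuity point of $\phi_i|_E$ and that $\pr_i(\gamma_j)\cdot \nu$ asymptotically places all of its mass in $E$ near $\xi^+$. Continuity of $\phi_i|_E$ at $\xi^+$ then transfers the concentration of $\pr_i(\gamma_j)\cdot \nu$ at $\xi^+$ to concentration of $\gamma_j\cdot \mu$ at $\eta$, completing the argument.
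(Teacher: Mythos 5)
Your proposal has two genuine gaps, one in each half of the plan. The first is the promotion step: it is \emph{not} true that coarse minimality forces the orbit $\Gamma\cdot o$ to be coarsely dense in $X$. Coarse minimality only makes the quasi-convex hull of the orbit coarsely dense; an orbit need not be coarsely dense in its own hull, and the paper itself records a counterexample just after Definition~\ref{defn:coarsely_min}: an infinite normal subgroup $H$ of infinite index in a hyperbolic group $G$ acts coarsely minimally but \emph{not} coboundedly on the Cayley graph of $G$. (A degenerate Kleinian surface group acting on the convex hull of its limit set is another.) Without orbit coarse density, $(\Gamma,d)$ is not known to be large-scale geodesic, so a ``$d$-bounded implies $d_i$-bounded'' statement cannot be chained into the linear bound $d_i\le L\,d+C$: the subadditivity $f(R_1+R_2)\lesssim f(R_1)+f(R_2)$ for $f(R)=\sup\{d_i(1,\gamma):d(1,\gamma)\le R\}$ needs coarse midpoints in the orbit. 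Note that in the paper coboundedness of $\Gamma\acts X$ is only deduced in the Conclusion \emph{as a consequence} of this very claim, so assuming it here is circular. The analogous promotion in Claim~\ref{C:d-prec-di} was legitimate precisely because there the source metric was $d_i$, where density of $\pr_i(\Gamma)$ in $G_i$ and cocompactness of $G_i\acts X_i$ do supply $(1,1)$-quasi-geodesics.

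The second gap is the Lusin transfer in the bounded-to-bounded step. Weak convergence $\pr_i(\gamma_j)\nu\to\delta_{\xi^+}$ gives, by Portmanteau, $\pr_i(\gamma_j)\nu(U)\to 1$ for every \emph{open} $U\ni\xi^+$, but yields no lower bound whatsoever on $\pr_i(\gamma_j)\nu(E)$ for a fixed closed Lusin set $E$: since $\{\pr_i(\gamma_j)\}$ leaves every compact set, $\sup_j\|\tfrac{d\,\pr_i(\gamma_j)\nu}{d\nu}\|_\infty=\infty$ (this unboundedness is exactly the dichotomy underlying Lemma~\ref{lem:RN}), so the measures $\pr_i(\gamma_j)\nu$ may push essentially all their mass into the open complement $E^c$ even though $\nu(E^c)$ is tiny. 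No modification of $E$ or passage to a subsequence of the kind you describe repairs this, because a single set $E$ must serve the whole tail of the sequence while the derivatives blow up; and your barycenter contradiction needs the concentrated mass near $\eta$ to exceed $(1-\epsilon)^{1/3}$-type thresholds, so partial concentration does not suffice. The paper avoids measures entirely for this direction: it picks a loxodromic $\gamma_0\in\Gamma$ (Gromov's classification), notes that $d$ and $d_i$ both grow linearly along $\{\gamma_0^j\}$, uses transitivity of the compact group $K_i$ on $\partial X_i$ to see that $\{\kappa\gamma_0^j:\kappa\in K_i,\ j\ge 0\}$ is coarsely dense in $X_i$, approximates $K_i$ inside $\Gamma$ via density of $\pr_i(\Gamma)$, and then chains inequalities using the already-proven Claim~\ref{C:d-prec-di} --- a purely metric argument you would do well to compare with your boundary-theoretic sketch.
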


\begin{proof}
Let $\gamma_0\in \Gamma$ be a loxodromic element for $\Gamma\acts X$, which exists by Gromov's classification of actions on hyperbolic spaces \cite[Section 8]{Gromov}. Since the identity map $(\Gamma,d_i)\to(\Gamma,d)$ is coarsely Lipschitz, $\gamma_0$ is loxodromic for $\Gamma\acts X_i$ as well. Let us choose a constants $A$ so that for each $j\geq 0$ we have $j/A\leq d_i(1,\gamma_0^j)\leq Aj$ and $j/A\leq d(1,\gamma_0^j)\leq Aj$.
Note that, up to increasing $A$, since $G_i$ is rank-one and $\gamma_0$ is loxodromic, the set 
\[ \{\kappa\gamma_0^j\mid \kappa\in K_i,~j\geq 0\} \]
is $A$-dense in $X_i$, where we denote by $K_i$ a maximal compact subgroup of $G_i$. This is because, since $K_i$ acts transitively on $\partial X$, there exists a constant $C$ such that given any two points on a sphere around $x_i$, there is an element of $K_i$ that moves the first point $C$-close to the second one.

Since the rank of $G_i$ equals one, we know by hypothesis that $N>1$ and in particular, we get that $\pr_i(\Gamma)$ is dense in $G_i$.
Approximating elements of $K_i$ by elements of $\Gamma$ we get that the set 
\[ \{\kappa\gamma_0^j\mid d_i(1,\kappa)\leq 1,~j\geq 0\} \]
is $A+1$-dense in $\Gamma$ for the metric $d_i$. 
Enlarging $A$ if necessary, let us further assume that $d(1,\gamma)\leq Ad_i(1,\gamma)+A$ for all $\gamma\in\Gamma$.

Consider an arbitrary $\gamma\in \Gamma$. 
Using the above we find $\kappa\in \Gamma$ such that $d_i(1,\kappa)\leq1$ and $d_i(\kappa\gamma,\gamma_0^j)\leq A+1$ for some $j\geq 0$. Therefore, we obtain
\begin{align*}
 d_i(1,\gamma) &= d_i(\kappa,\kappa\gamma)\\
 &\leq d_i(1,\gamma_0^j)+A+2\\
 & \leq Aj+A+2\\
 & \leq A^2d(1,\gamma_0^j)+A+2\\
 &\leq  A^2 (d(1,\kappa)+d(\kappa,\kappa\gamma)+d(\kappa\gamma,\gamma_0^j))+A+2\\
 & \leq A^2d(1,\gamma)+A^3+2A^2+3A+2,
\end{align*}

as required.
\end{proof}

\subsection{Conclusion}

We have seen in Section~\ref{sub:step_2.5_rank_1_factor} that the rank of $G_i$ is~$1$. 
Claims \ref{C:d-prec-di} and \ref{C:d_i-prec-d} imply that there is a $\Gamma$-equivariant quasi-isometric embedding $f:X_i\to \bdd_C(X)$ for some $C\geq 0$ (since $(\Gamma,d_i)$ is $\Gamma$-equivariantly quasi-isometric to $X_i$, and there is a $\Gamma$-equivariant quasi-isometric embedding $(\Gamma,d_i)\to X$). 
Since $X_i$ is hyperbolic, the image of that quasi-isometric embedding is quasi-convex. Since the $\Gamma$-action on $X$ is coarsely minimal, it follows that the $\Gamma$-action on $X$ is cobounded, so that the quasi-isometric embedding $(\Gamma,d_i)\to X$ is in fact a quasi-isometry. This proves that $\Gamma\acts X_i$ is indeed equivalent to $\Gamma\acts X$, as required.\qed

\bibliographystyle{alpha}
\bibliography{biblio}

\end{document}